\DeclareMathAlphabet{\EuFrak}{U}{euf}{m}{n}
\DeclareMathAlphabet{\EuScript}{U}{eus}{m}{n}
\newtheorem{theorem}{\rmfamily\bfseries{Theorem}}[section] 
\newtheorem{corollary}[theorem]{\rmfamily\bfseries{Corollary}} 
\newtheorem{lemma}[theorem]{\rmfamily\bfseries{Lemma}} 
\newtheorem{proposition}[theorem]{\rmfamily\bfseries{Proposition}}
\newtheorem{definition}[theorem]{\rmfamily\bfseries{Definition}}
\theoremstyle{remark}
\newtheorem{remark}{Remark}
\numberwithin{equation}{section}
\def\Bbb{\mathbb}
\newcommand{\mona}{\mbox{\LARGE\itshape a}}
\def\logo{\raisebox{-10.5\p@}{\hb@xt@85\p@{\includegraphics{gft.eps}\hfil}}}
\def\un{1\kern-3pt \rm I}
\newcommand{\oN}{{\mathbb N}}
\newcommand{\oR}{{\mathbb R}}
\newcommand{\oC}{{\mathbb C}}
\newcommand{\supp}{\mathrm{supp}}
\begin{document}
{\hfill
\parbox{50mm}{{\sf CEFT-SFM-DHTF07/2}} \vspace{8mm}}

\title[A Uniqueness Theorem]
      {{A Uniqueness Theorem and Its Application \\ to Field-Theoretical
           Models with a Fundamental Length}}

\author{Daniel H.T. Franco}
\address{Universidade Federal de Vi\c cosa \\
        Departamento de F\'\i sica, Avenida Peter Henry Rolfs s/n \\  
        Campus Universit\'ario, Vi\c cosa, MG, Brasil, CEP: 36570-000.}
\email{dhtf@terra.com.br}

\keywords{Distributions of exponential growth, tempered ultrahyperfunctions,
          axiomatic field theory, wavefront set.}
\subjclass{46F12, 46F15, 46F20, 81T05}
\date{\today}
\thanks{This work is supported by the Funda\c c\~ao de Amparo \`a Pesquisa do
        Estado de Minas Gerais (FAPEMIG) agency, grant CEX00012/07. Also at
        Centro de Estudos de F\'\i sica Te\'orica, Setor de F\'\i sica--Matem\'atica,
        Belo Horizonte, MG, Brasil.}

\begin{abstract}
It is shown that if a distribution $V$ of exponential growth has support
in a proper convex cone and its Fourier transform is carried by a closed
cone different from whole space, then $V=0$. The application of this result
to a {\em quasi-local} quantum field theory (where the fields are
localizable only in regions greater than a certain scale of nonlocality) is
contemplated. In particular, we show that a number of physically important
predictions of {\em local} quantum field theory also hold in a quantum field theory
with a fundamental length, as indicated from string theory. 
\end{abstract}

\maketitle

\section{Introduction}
In Ref.~\cite{Solo1,Solo2} Soloviev showed that if a distribution $u \in {\mathscr D}^\prime$
has support in a proper convex cone and its Fourier transform, an analytic
functional $v$ belonging to the space $Z^\prime$ of ultradistributions of Gel'fand and
Shilov, is carried by a closed cone $C$ different from
the whole space, then $u \equiv 0$. (In~\cite{Solo1,Solo2} Soloviev uses the notation
$S^{\prime 0}$ in place of $Z^\prime$ in order to stress that this is the
smallest space among the Gel'fand-Shilov~\cite{GelShi} spaces $S^{\prime \beta}$,
$0 \leq \beta < 1$, traditionally adopted in nonlocal quantum field theory).
His proof is based on the notion of the analytic wavefront
set of a distribution and makes possible to deal nonlocal quantum fields. In this paper,
we show that a similar uniqueness theorem also holds for the space of distributions of
exponential growth $V \in H^\prime$, since the latter is embedded in the space of
distributions. Thus, we can use the general facts from distribution theory to analyse the
analytic wavefront set of a distribution of exponential growth. It is known that the Fourier
transform is a topological isomorphism between elements in $H^\prime$ and elements in the
space of tempered ultrahyperfunctions ${\mathfrak H}^\prime$. The tempered ultrahyperfunctions,
ori\-gi\-nally called {\em tempered ultradistributions}, has been studied by many
authors~\cite{Tiao1}-\cite{Daniel2} and represents a natural generalization of the notion of
hyperfunctions on $\oR^n$, but are {\it non-localizable}. We shall show that if a distribution
$V$ of exponential growth has support in a proper convex cone and its Fourier transform, an
analytic functional $U \in {\mathfrak H}^\prime$, is carried by a closed cone different from
whole space, then $V \equiv 0$. 

The plan of the paper is as follows. In Section \ref{Sec2}, we introduce the notation and
definitions used here. In Section \ref{Sec3}, we shall collect some facts of the theory on tempered
ultrahyperfunctions. There we define the space of tempered ultrahyperfunctions corresponding
to a proper open convex cone. Properties of analytic functionals in ${\mathfrak H}^\prime$ with
real unbounded carriers are investigated in the Section \ref{Sec4}. Section \ref{Sec5} is devoted
to the proof of uniqueness theorem. We note that this result is of importance in the construction
and study of {\em quasilocal} quantum field theories (where the fields are localizable only in
regions greater than a certain scale of nonlocality). For this reason, in Section \ref{Sec6}, as an
application of the uniqueness theorem, we give also a proof of the validity of some important theorems
in quantum field theory, namely the proofs of the CPT theorem (which is the basis of particle and
anti-particle symmetry) and the theorem on the Spin-Statistics connection (which is the basis for
the stability of matter in the Nature) in the setting of a quantum field theory with a fundamental
length. This section is meant for mathematicians who also want to become acquainted with the
applications of tempered ultrahyperfunctions in physics, as well as for physicists who are interested
in tempered ultrahyperfunctions as part of mathematical and theoretical physics.

\section{Notation and Definitions}
\label{Sec2}
The following multi-index notation is used without further explanation. Let
$\oR^n$ (resp. $\oC^n=\oR^n+i\oR^n$) be the real (resp. complex) $n$-space whose generic
points are denoted by $x=(x_1,\ldots,x_n)$ (resp. $z=(z_1,\ldots,z_n)$), such that
$x+y=(x_1+y_1,\ldots,x_n+y_n)$, $\lambda x=(\lambda x_1,\ldots,\lambda x_n)$,
$x \geq 0$ means $x_1 \geq 0,\ldots,x_n \geq 0$, $\langle x,y \rangle=x_1y_1+\cdots+x_ny_n$
and $|x|=|x_1|+\cdots+|x_n|$. Moreover, we define
$\alpha=(\alpha_1,\ldots,\alpha_n) \in \oN^n_o$, where $\oN_o$ is the set
of non-negative integers, such that the length of $\alpha$ is the corresponding
$\ell^1$-norm $|\alpha|=\alpha_1+\cdots +\alpha_n$, $\alpha+\beta$ denotes
$(\alpha_1+\beta_1,\ldots,\alpha_n+\beta_n)$, $\alpha \geq \beta$ means
$(\alpha_1 \geq \beta_1,\ldots,\alpha_n \geq \beta_n)$, $\alpha!=
\alpha_1! \cdots \alpha_n!$, $x^\alpha=x_1^{\alpha_1}\ldots x_n^{\alpha_n}$,
and
\[
D^\alpha \varphi(x)=\frac{\partial^{|\alpha|}\varphi(x_1,\ldots,x_n)}
{\partial x_1^{\alpha_1}\partial x_2^{\alpha_1}\ldots\partial x_n^{\alpha_n}}\,\,.
\]
Let $\Omega$ be a set in $\oR^n$. Then we denote by $\Omega^\circ$ the interior
of $\Omega$ and by $\overline{\Omega}$ the closure of $\Omega$. For $r > 0$, we
denote by $B(x_o;r)=\bigl\{x \in \oR^n \mid |x-x_o| < r\bigr\}$ a open ball
and by $B[x_o;r]=\bigl\{x \in \oR^n \mid |x-x_o| \leq r\bigr\}$ a closed ball,
with center at point $x_o$ and of radius $r=(r_1,\ldots,r_n)$, respectively.

We consider two $n$-dimensional spaces -- $x$-space and $\xi$-space -- with the
Fourier transform defined
\[
\widehat{f}(\xi)={\mathscr F}[f(x)](\xi)=
\int_{\oR^n} f(x)e^{i \langle \xi,x \rangle} d^nx\,\,,
\]
while the Fourier inversion formula is
\[
f(x)={\mathscr F}^{-1}[\widehat{f}(\xi)](x)= \frac{1}{(2\pi)^n}
\int_{\oR^n} \widehat{f}(\xi)e^{-i \langle \xi,x \rangle} d^n\xi\,\,.
\]
The variable $\xi$ will always be taken real while $x$ will also be
complexified -- when it is complex, it will be noted $z=x+iy$. The
above formulas, in which we employ the symbolic ``function notation,''
are to be understood in the sense of distribution theory.

We now remind some terminology and simple facts concerning cones. An open
set $C \subset \oR^n$ is called a cone if $\oR_+\!\cdot C \subset C$. A cone
$C$ is an open connected cone if $C$ is an open connected set. Moreover, $C$ is
called convex if $C+C \subset C$ and {\it proper} if it contains no any straight
line. A cone $C^\prime$ is called compact in $C$ --
we write $C^\prime \Subset C$ -- if the projection ${\sf pr}{\overline C^{\,\prime}}
\overset{\text{def}}{=}{\overline C^{\,\prime}} \cap S^{n-1} \subset
{\sf pr}C\overset{\text{def}}{=}C \cap S^{n-1}$, where $S^{n-1}$ is the unit sphere
in $\oR^n$. Being given a cone $C$ in $y$-space, we associate with $C$ a closed convex
cone $C^*$ in $\xi$-space which is the set $C^*=\bigl\{\xi \in \oR^n \mid \langle \xi,y
\rangle \geq 0,\forall\,\,y \in C \bigr\}$. The cone $C^*$ is called the {\em dual cone}
of $C$. In the sequel, it will be sufficient to assume for our purposes that the open
connected cone $C$ in $\oR^n$ is an open convex cone with vertex at the origin
and proper. By $T(C)$ we will denote the set $\oR^n+iC \subset \oC^n$.
If $C$ is open and connected, $T(C)$ is called the tubular radial domain in
$\oC^n$, while if $C$ is only open $T(C)$ is referred to as a tubular cone. In the
former case we say that $f(z)$ has a boundary value $U=BV(f(z))$ in ${\mathfrak H}^\prime$
as $y \rightarrow 0$, $y \in C$ or $y \in C^\prime \Subset C$, respectively, if
for all $\psi \in {\mathfrak H}$ the limit
\[
\langle U, \psi \rangle=\lim_{{\substack{y \rightarrow 0 \\
y \in C~{\rm or}~C^\prime}}} \int_{\oR^n} f(x+iy)\psi(x) d^nx\,\,,
\]
exists. We will deal with tubes defined as the set of all points $z \in \oC^n$
such that
\[
T(C)=\Bigl\{x+iy \in \oC^n \mid x \in \oR^n, y \in C, |y| < \delta \Bigr\}\,\,,
\]
where $\delta > 0$ is an arbitrary number.

\section{Tempered Ultrahyperfunctions}
\label{Sec3}
We shall introduce briefly here some definitions and basic properties of the
tempered ultrahyperfunction space of Sebasti\~ao e Silva~\cite{Tiao1,Tiao2}
and Hasumi~\cite{Hasumi} (we indicate the Refs. for more details).
To begin with, we shall consider the function
\[
h_{K}(\xi)=\sup_{x \in K} \langle \xi,x \rangle\,\,,
\quad \xi \in \oR^n\,\,,
\]
where $K$ is a compact set in $\oR^n$. One calls $h_{K}(\xi)$ the {\it supporting
function} of $K$. We note that $h_{K}(\xi) < \infty$ for every $\xi \in \oR^n$ since
$K$ is bounded. For sets $K=\bigl[-k,k\bigr]^n$, $0 < k < \infty$, the supporting
function $h_{K}(\xi)$ can be easily determined:
\[
h_{K}(\xi)=\sup_{x \in K} \langle \xi,x \rangle =
k|\xi|\,\,,\quad \xi \in \oR^n\,\,,\quad |\xi|=\sum_{i=1}^n|\xi_i|\,\,.
\]

Let $K$ be a convex compact subset of $\oR^n$,
then $H_b(\oR^n;K)$ ($b$ stands for bounded) defines the space of all
functions $\in C^\infty(\oR^n)$ such that $e^{h_K(\xi)}D^\alpha\!f(\xi)$
is bounded in $\oR^n$ for any multi-index $\alpha$. One defines in
$H_b(\oR^n;K)$ seminorms
\begin{equation}
\|\varphi\|_{K,N}=\sup_{{\substack{\xi \in \oR^n \\ \alpha \leq N}}}
\bigl\{e^{h_K(\xi)}|D^\alpha f(\xi)|\bigr\} < \infty\,\,,
\quad N \in \oN \,\,.
\label{snorma2}
\end{equation}

If $K_1 \subset K_2$ are two compact convex sets, then
$h_{K_1}(\xi) \leq h_{K_2}(\xi)$, and thus the canonical
injection $H_b(\oR^n;K_2) \hookrightarrow H_b(\oR^n;K_1)$
is continuous. Let $O$ be a convex open set of $\oR^n$.
To define the topology of $H(\oR^n;O)$ it suffices to let $K$ range
over an increasing sequence of convex compact subsets $K_1,K_2,\ldots$
contained in $O$ such that for each $i=1,2,\ldots$,
$K_i \subset K_{i+1}^\circ$ and ${O}=\bigcup_{i=1}^\infty K_i$.
Then the space $H(\oR^n;O)$ is the projective limit of the
spaces $H_b(\oR^n;K)$ according to restriction mappings
above, {\em i.e.}
\begin{equation}
H(\oR^n;O)=\underset{K \subset {O}}{\lim {\rm proj}}\,\,
H_b(\oR^n;K)\,\,,
\label{limproj2}
\end{equation} 
where $K$ runs through the convex compact sets contained in $O$. Any
$C^\infty$ function of exponential growth is a multiplier in $H(\oR^n;O)$.

\begin{theorem}[\cite{Hasumi,Mari1,BruNa1}]
The space ${\mathscr D}({\oR^n})$ of all $C^\infty$-functions
on $\oR^n$ with compact support is dense in $H(\oR^n;K)$ and $H(\oR^n;O)$.
Moreover, the space $H(\oR^n;\oR^n)$ is dense in $H(\oR^n;O)$ and in
$H(\oR^n;K)$, and $H(\oR^m;\oR^m) \otimes H(\oR^n;\oR^n)$ is dense in
$H(\oR^{m+n};\oR^{m+n})$.
\label{theoINJ}
\end{theorem}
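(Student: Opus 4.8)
The plan is to prove the three assertions in the order stated, extracting from the first one the truncation estimate that drives the other two. Throughout I reduce to the basic seminorms $\|\cdot\|_{K,N}$ of \eqref{snorma2}: a subset of $H(\oR^n;O)$ is dense precisely when it approximates every $f$ in each $\|\cdot\|_{K,N}$ with $K$ a convex compact subset of $O$, and similarly for $H(\oR^n;K)$.

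First I would prove that ${\mathscr D}(\oR^n)$ is dense in $H(\oR^n;O)$. Fix $f$ and a seminorm $\|\cdot\|_{K,N}$, $K \subset O$. Since $O$ is open and convex I can interpose a convex compact $K'$ with $K \subset (K')^\circ \subset K' \subset O$; because $K$ is compact and $(K')^\circ$ open there is $\delta>0$ with $K+B[0;\delta]\subset K'$, whence the supporting functions obey $h_{K'}(\xi) \geq h_K(\xi)+\delta|\xi|$. This is the decisive gain: as $f \in H_b(\oR^n;K')$ the quantity $e^{h_{K'}(\xi)}|D^\beta f(\xi)|$ is bounded for $\beta \leq N$, so
\[
e^{h_K(\xi)}|D^\beta f(\xi)| \leq e^{-\delta|\xi|}\,e^{h_{K'}(\xi)}|D^\beta f(\xi)| \leq M\,e^{-\delta|\xi|},
\]
i.e.\ the $K$-weighted derivatives decay exponentially rather than merely staying bounded. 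Now choose $\chi \in {\mathscr D}(\oR^n)$ with $\chi \equiv 1$ on $B[0;1]$ and $\chi \equiv 0$ off $B[0;2]$, set $\chi_R(\xi)=\chi(\xi/R)$, and put $f_R=\chi_R f \in {\mathscr D}(\oR^n)$. Expanding $D^\alpha\bigl((1-\chi_R)f\bigr)$ by Leibniz and using that each $D^\beta\chi_R$ is bounded and supported in $\{R \leq |\xi| \leq 2R\}$, every resulting term carries the factor $e^{-\delta|\xi|}$ on $\{|\xi| \geq R\}$, so $\|f-f_R\|_{K,N} \leq C\,e^{-\delta R} \to 0$. The case of $H(\oR^n;K)$ is identical: its defining topology again furnishes, for each admissible weight, a strictly larger reference set giving the same $e^{-\delta|\xi|}$ margin, and the truncation argument applies verbatim.

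The second assertion is then immediate from the inclusions ${\mathscr D}(\oR^n) \subset H(\oR^n;\oR^n) \subset H(\oR^n;O) \subset H_b(\oR^n;K)$, which hold because a compactly supported smooth function satisfies every weighted bound and because enlarging the set only weakens the topology. Since ${\mathscr D}(\oR^n)$ is already dense in $H(\oR^n;O)$ and in $H(\oR^n;K)$ by the first part and is contained in $H(\oR^n;\oR^n)$, the intermediate space $H(\oR^n;\oR^n)$ is a fortiori dense in both.

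For the tensor-product statement I would argue by a two-step approximation. Given $f \in H(\oR^{m+n};\oR^{m+n})$, the first part yields $\phi \in {\mathscr D}(\oR^{m+n})$ with $\|f-\phi\|_{K,N}$ small; the standard density of ${\mathscr D}(\oR^m)\otimes{\mathscr D}(\oR^n)$ in ${\mathscr D}(\oR^{m+n})$ then produces a finite sum $\psi=\sum_i g_i \otimes h_i$, with all $g_i,h_i$ in ${\mathscr D}$ and all supports inside one fixed compact box $L$, such that $\phi$ and its derivatives are uniformly close to those of $\psi$. Because $e^{h_K}$ is bounded on $L$, this uniform (i.e.\ ${\mathscr D}$-topology) closeness on the fixed compact support forces closeness in $\|\cdot\|_{K,N}$; as $g_i,h_i \in {\mathscr D} \subset H(\oR^m;\oR^m),H(\oR^n;\oR^n)$, the sum $\psi$ lies in $H(\oR^m;\oR^m)\otimes H(\oR^n;\oR^n)$, and density follows. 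The one genuinely delicate point throughout is the passage from ``bounded'' to ``decaying'' weighted derivatives: without the interior margin $h_{K'}-h_K \geq \delta|\xi|$ the truncation error need not vanish (a function with $e^{h_K}|f|\to 1$ cannot be approximated by compactly supported functions in $\|\cdot\|_{K,0}$), so securing that margin from the open/room structure of the target space is the crux on which every step rests.
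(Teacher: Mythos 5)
The paper offers no proof of this statement --- it is imported verbatim from the cited references \cite{Hasumi,Mari1,BruNa1} --- so there is nothing internal to compare against; judged on its own, your argument is correct and is in fact the standard proof found in those references. The one idea that carries everything is exactly the one you isolate: interposing $K\subset (K')^\circ\subset K'\subset O$ to convert the mere boundedness of $e^{h_{K'}(\xi)}D^\alpha f(\xi)$ into the exponential decay $e^{h_K(\xi)}|D^\alpha f(\xi)|\leq M e^{-\delta|\xi|}$, which makes the cutoff errors vanish like $e^{-\delta R}$; your Leibniz estimate and the tensor step (truncate, then use density of ${\mathscr D}(\oR^m)\otimes{\mathscr D}(\oR^n)$ in ${\mathscr D}(\oR^{m+n})$ on a fixed compact set where $e^{h_K}$ is bounded) are both sound. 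Two small points worth making explicit: the reduction of density to approximation in a \emph{single} seminorm uses that the family $\{\|\cdot\|_{K,N}\}$ is directed, i.e.\ that finitely many compact convex $K_i\subset O$ sit inside one compact convex $K\subset O$ (true because $O$ is convex); and since the paper never actually defines $H(\oR^n;K)$ for compact $K$, your ``verbatim'' claim for that space tacitly assumes its usual realization as an inductive limit of the $H_b(\oR^n;K_1)$ over compact convex $K_1$ with $K\subset K_1^\circ$, in which case the same interior margin is available and the truncations converge in a fixed step $H_b(\oR^n;K_2)$ of the limit.
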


From Theorem \ref{theoINJ} we have the following injections~\cite{Mari1}:
\[
H^\prime(\oR^n;K) \hookrightarrow H^\prime(\oR^n;\oR^n)
\hookrightarrow {\mathscr D}^\prime(\oR^n)\,\,,
\]
and
\[
H^\prime(\oR^n;O) \hookrightarrow H^\prime(\oR^n;\oR^n)
\hookrightarrow {\mathscr D}^\prime(\oR^n)\,\,.
\]

\begin{definition}
The dual space $H^\prime(\oR^n;O)$ of $H(\oR^n;O)$ is the space of
distributions of exponential growth.
\end{definition}

A distribution $V \in H^\prime(\oR^n;O)$ may be expressed as a finite
order deri\-va\-ti\-ve of a continuous function of exponential growth
\[
V=D^\gamma_\xi[e^{h_K(\xi)}g(\xi)]\,\,,
\]
where $g(\xi)$ is a bounded continuous function. For $V \in
H^\prime(\oR^n;O)$ the follo\-wing result is known:

\begin{lemma}[\cite{Mari1}]
A distribution $V \in {\mathscr D}^\prime(\oR^n)$ belongs to $H^\prime(\oR^n;O)$
if and only if there exists a multi-index $\gamma$, a convex compact set $K \subset O$
and a bounded continuous function $g(\xi)$ such that
\[
V=D^\gamma_\xi[e^{h_K(\xi)}g(\xi)]\,\,.
\]
\label{lemmaMari}
\end{lemma}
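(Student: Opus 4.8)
The plan is to prove both implications, treating the elementary sufficiency first and then the structural necessity, which is the substantive half. For sufficiency, suppose $V=D^\gamma_\xi[e^{h_K(\xi)}g(\xi)]$ with $g$ bounded continuous and $K\subset O$ convex compact. Since $O$ is open I can choose a convex compact $K'$ with $K\subset (K')^\circ\subset K'\subset O$, so that $h_{K'}(\xi)-h_{K}(\xi)\geq c|\xi|$ for some $c>0$. For $\varphi\in H(\oR^n;O)$ integration by parts gives
\[
|\langle V,\varphi\rangle|\leq\int_{\oR^n}e^{h_K(\xi)}|g(\xi)|\,|D^\gamma\varphi(\xi)|\,d^n\xi\leq\|g\|_\infty\,\|\varphi\|_{K',|\gamma|}\int_{\oR^n}e^{h_K(\xi)-h_{K'}(\xi)}\,d^n\xi,
\]
and the last integral is finite by the linear decay of the exponent. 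Hence $V$ is continuous for the seminorm $\|\cdot\|_{K',|\gamma|}$ of \eqref{snorma2}, so $V\in H^\prime(\oR^n;O)$.

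For necessity, let $V\in H^\prime(\oR^n;O)$. Because $H(\oR^n;O)$ carries the projective-limit topology \eqref{limproj2} with directed seminorms \eqref{snorma2}, continuity furnishes a convex compact $K_0\subset O$, an integer $N$ and a constant $C$ with $|\langle V,\varphi\rangle|\leq C\|\varphi\|_{K_0,N}$. The key preparatory step is to replace the non-smooth weight $e^{h_{K_0}}$ by a smooth majorant: mollifying the convex Lipschitz function $h_{K_0}$ yields a smooth $E=e^{\tilde h}$ with $e^{h_{K_0}}\leq E\leq e^{h_{K_1}}$ and $|D^\beta E|\leq C_\beta e^{h_{K_1}}$ for a slightly larger convex compact $K_1$, $K_0\subset K_1^\circ\subset O$. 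Then $\|\varphi\|_{K_0,N}\leq\sup_{|\alpha|\leq N}\|E\,D^\alpha\varphi\|_\infty$, and comparing $E$ with a still larger weight shows that $\varphi\mapsto(E\,D^\alpha\varphi)_{|\alpha|\leq N}$ embeds $H(\oR^n;O)$ into $\bigoplus_{|\alpha|\leq N}C_0(\oR^n)$. On the image the functional $T\varphi\mapsto\langle V,\varphi\rangle$ is well defined and bounded by $C\|T\varphi\|$; Hahn--Banach extends it to the whole direct sum, and the Riesz--Markov theorem represents the extension by finite Radon measures $\mu_\alpha$, giving
\[
\langle V,\varphi\rangle=\sum_{|\alpha|\leq N}\int_{\oR^n}E(\xi)\,D^\alpha\varphi(\xi)\,d\mu_\alpha(\xi).
\]

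It then remains to convert this into a single derivative of $e^{h_K}$ times a bounded continuous function. I would write each finite measure as $\mu_\alpha=D^{\beta}_\xi F_\alpha$ with $\beta=(2,\ldots,2)$ and $F_\alpha$ continuous of at most polynomial growth (iterated cumulative distribution functions). Because $E$ is now \emph{smooth}, I may integrate by parts and apply the Leibniz rule to $D^{\beta}(E\,D^\alpha\varphi)$; using $|D^\beta E|\leq C_\beta e^{h_{K_1}}$ this rewrites $\langle V,\varphi\rangle$ as a finite sum $\sum_j\int G_j\,D^{\delta_j}\varphi\,d^n\xi$ with $G_j$ continuous, $|G_j(\xi)|\leq C(1+|\xi|)^{M}e^{h_{K_1}(\xi)}$, and the $\delta_j$ bounded componentwise. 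Antidifferentiating each $G_j$ up to a common top multi-index $\gamma$ and summing yields $\langle V,\varphi\rangle=\int_{\oR^n}\widetilde G(\xi)\,D^\gamma\varphi(\xi)\,d^n\xi$ with $\widetilde G$ continuous and $|\widetilde G(\xi)|\leq C'(1+|\xi|)^{M'}e^{h_{K_1}(\xi)}$. Finally I enlarge $K_1$ to a convex compact $K$ with $K_1\subset K^\circ\subset O$, so that $(1+|\xi|)^{M'}e^{h_{K_1}(\xi)}\leq C''e^{h_K(\xi)}$, and set $g(\xi)=(-1)^{|\gamma|}\widetilde G(\xi)e^{-h_K(\xi)}$; then $g$ is bounded continuous and $V=D^\gamma_\xi[e^{h_K(\xi)}g(\xi)]$.

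The main obstacle is precisely the non-smoothness of the supporting function $h_{K}$: one cannot differentiate the natural weight $e^{h_{K_0}}$, which is exactly what the integration by parts turning the Radon measures into continuous densities requires. Introducing the mollified smooth weight $E$ already at the Hahn--Banach stage circumvents this, at the mild cost of enlarging the compact set twice; the freedom $K_0\subset K_1\subset K\subset O$ is what lets the accumulated polynomial factors be absorbed into $e^{h_K}$.
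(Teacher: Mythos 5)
The paper does not actually prove this lemma; it is quoted from Morimoto \cite{Mari1} with no argument supplied, so your proposal can only be measured against the standard structure-theorem proof in the literature, which is indeed the route you take: reduce continuity to a single seminorm $\|\cdot\|_{K_0,N}$, embed $\varphi\mapsto(E\,D^\alpha\varphi)_{|\alpha|\leq N}$ into a product of $C_0$ spaces, apply Hahn--Banach and Riesz--Markov, and integrate by parts. The sufficiency half is correct, the smoothing of $h_{K_0}$ by mollification is the right fix for the non-differentiable weight, and the representation $\langle V,\varphi\rangle=\sum_{|\alpha|\leq N}\int E\,D^\alpha\varphi\,d\mu_\alpha$ with finite measures $\mu_\alpha=D^{(2,\ldots,2)}F_\alpha$, $F_\alpha$ continuous of polynomial growth, is sound.

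The genuine gap is the very last step, where you antidifferentiate the functions $G_j$ (which grow like $(1+|\xi|)^{M}e^{h_{K_1}(\xi)}$) up to a common order $\gamma$ and assert $|\widetilde G(\xi)|\leq C'(1+|\xi|)^{M'}e^{h_{K_1}(\xi)}$. Unlike the antidifferentiation of the finite measures, this bound is not automatic, and it fails for the most natural choice of primitive. Already in one variable with $K_1=[a,b]$ and $b<0$ one has $h_{K_1}(\xi)=b\xi$ for $\xi\geq 0$, so
\[
\int_0^{\xi}e^{h_{K_1}(t)}\,dt=\frac{1-e^{b\xi}}{-b}\longrightarrow\frac{1}{-b}\neq 0,
\qquad\text{while}\qquad (1+|\xi|)^{M'}e^{h_{K_1}(\xi)}\longrightarrow 0 ;
\]
the primitive based at the origin is exponentially larger than the claimed majorant, and one must instead integrate from $+\infty$ (or $-\infty$, or $0$) depending on the sign of $\sup_{x\in K_1}x_j$ and $\inf_{x\in K_1}x_j$ in each coordinate. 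In several variables the situation is worse: for a non-product $K_1$ (e.g.\ the segment from $(0,-1)$ to $(1,1)$ in $\oR^2$) even the case $\inf x_1\leq 0\leq\sup x_1$ requires a correction term beyond a pure base-point integral, because $h_{K_1}(0,\xi')$ can exceed $h_{K_1}(\xi_1,\xi')$ by a quantity linear in $|\xi_1|$. One also cannot simply enlarge $K_1$ to a cube $[-k,k]^n$, since $O$ need not contain one. So the reduction of the finite sum $\sum_j\int G_j D^{\delta_j}\varphi$ to a single term $D^\gamma[e^{h_K}g]$ --- which is precisely the content that distinguishes the lemma from the weaker statement ``$V$ is a finite sum of such derivatives'' --- needs a direction-adapted construction of the primitives that you have not supplied. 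Note that everywhere the present paper actually invokes the lemma it takes $K=[-k,k]^n$, $h_K(\xi)=k|\xi|=k\sum_i|\xi_i|$, for which the weight factorizes over coordinates and the required primitive bounds are immediate; for that special case your argument closes without difficulty.
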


In the space $\oC^n$ of $n$ complex variables $z_i=x_i+iy_i$,
$1 \leq i \leq n$, we denote by $T(\Omega)=\oR^n+i\Omega \subset \oC^n$
the tubular set of all points $z$, such that $y_i={\text{Im}}\,z_i$ belongs
to the domain $\Omega$, {\em i.e.}, $\Omega$ is a connected open set in $\oR^n$
called the basis of the tube $T(\Omega)$. Let $K$ be a convex compact
subset of $\oR^n$, then ${\mathfrak H}_b(T(K))$ defines
the space of all continuous functions $\varphi$ on $T(K)$ which are holomorphic
in the interior $T(K^\circ)$ of $T(K)$ such that the estimate
\begin{equation}
|\varphi(z)| \leq {\boldsymbol{\sf M}}_{_{T(K),N}}(\varphi) (1+|z|)^{-N}
\label{est}
\end{equation}
is valid. The best possible constants in (\ref{est}) are given by a family of seminorms in
${\mathfrak H}_b(T(K))$
\begin{equation}
\|\varphi\|_{T(K),N}=\inf\Bigl\{{\boldsymbol{\sf M}}_{_{T(K),N}}(\varphi) \mid
\sup_{z \in T(K)} \bigl\{(1+|z|)^N|\varphi(z)|\bigr\} < \infty, N \in \oN \Bigr\}\,\,.
\label{snorma1}
\end{equation}

If $K_1 \subset K_2$ are two convex compact sets, we have that the canonical injection
\begin{equation}
{\mathfrak H}_b(T(K_2)) \hookrightarrow {\mathfrak H}_b(T(K_1)\,\,,
\label{canoinj}
\end{equation}
is continuous.

Let $K$ be a convex compact set in $\oR^n$. Then the space ${\mathfrak H}(T(K))$
is characterized as a inductive limit
\begin{equation}
{\mathfrak H}(T(K))=\underset{K_1 \supset K}{\lim {\rm ind}}\,\,
{\mathfrak H}_b(T(K_1))\,\,,
\label{limind1}
\end{equation}
where $K_1$ runs through the convex compact sets such that $K$ is contained
in the interior of $K_1$ and the inductive limit is taken following the restriction
mappings (\ref{canoinj}).

Given that the spaces ${\mathfrak H}_b(T(K_i))$ are Fr\'echet spaces, with topology
defined by the seminorms (\ref{snorma1}), the space ${\mathfrak H}(T({O}))$ is
characterized as a projective limit of Fr\'echet spaces: 
\begin{equation}
{\mathfrak H}(T({O}))=\underset{K \subset {O}}{\lim {\rm proj}}\,\,
{\mathfrak H}_b(T(K))\,\,,
\label{limproj1}
\end{equation}
where $K$ runs through the convex compact sets contained in $O$ and
the projective limit is taken following the restriction mappings above. Any
$C^\infty$ function which can be extended to be an entire function of polynomial
growth, that is, slow growth, is a multiplier in ${\mathfrak H}(T({O}))$.

For any element $U \in {\mathfrak H}^\prime$, its Fourier transform is
defined to be a distribution $V$ of exponential growth, such that the
Parseval-type relation
\begin{equation}
\langle V,\varphi \rangle=\langle U,\psi \rangle\,\,,\quad
\varphi \in H\,\,,\,\,\psi={\mathscr F}[\varphi] \in {\mathfrak H}\,\,,
\label{PRel1}
\end{equation}
holds. In the same way, the inverse Fourier transform of a distribution $V$ of
exponential growth is defined by the relation 
\begin{equation}
\langle U,\psi \rangle=\langle V,\varphi \rangle\,\,,\quad
\psi \in {\mathfrak H}\,\,,\,\,\varphi={\mathscr F}^{-1}[\psi] \in H\,\,.
\label{PRel2}
\end{equation}

It follows from the Fourier transform and Theorem \ref{theoINJ} the

\begin{theorem}[\cite{Mari1,BruNa1}]
The space ${\mathfrak H}(T(\oR^n))$ is dense in ${\mathfrak H}(T(O))$ and
in ${\mathfrak H}(T(K))$, and the space ${\mathfrak H}(T(\oR^{m+n}))$ is dense
in ${\mathfrak H}(T(O))$.
\label{theoINJE}
\end{theorem}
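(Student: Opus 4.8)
The plan is to transport every assertion of Theorem~\ref{theoINJ}, which is phrased on the $\xi$-space side, to the tube side by means of the Fourier transform. The essential input is that $\mathscr{F}$ is a \emph{topological} isomorphism: as recorded in the Parseval-type relations~(\ref{PRel1}) and~(\ref{PRel2}), $\mathscr{F}$ maps $H(\oR^n;O)$ bijectively and bicontinuously onto $\mathfrak{H}(T(O))$, and likewise $H(\oR^n;K)$ onto $\mathfrak{H}(T(K))$ and $H(\oR^n;\oR^n)$ onto $\mathfrak{H}(T(\oR^n))$, with $\mathscr{F}^{-1}$ the inverse homeomorphism. Since a homeomorphism of topological vector spaces carries dense subsets onto dense subsets and commutes with the canonical injections induced by $K_1 \subset K_2$, every density statement is preserved under $\mathscr{F}$. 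Note also that, being projective limits over the countable increasing family $K_1 \subset K_2 \subset \cdots$, all the spaces involved are Fr\'echet, hence metrizable, so it suffices to approximate by sequences.

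First I would dispose of the two assertions in the first sentence. By Theorem~\ref{theoINJ}, $H(\oR^n;\oR^n)$ is dense both in $H(\oR^n;O)$ and in $H(\oR^n;K)$. Using $\mathscr{F}[H(\oR^n;\oR^n)] = \mathfrak{H}(T(\oR^n))$, $\mathscr{F}[H(\oR^n;O)] = \mathfrak{H}(T(O))$ and $\mathscr{F}[H(\oR^n;K)] = \mathfrak{H}(T(K))$, one concludes at once that $\mathfrak{H}(T(\oR^n))$ is dense in $\mathfrak{H}(T(O))$ and in $\mathfrak{H}(T(K))$: if $\psi \in \mathfrak{H}(T(O))$, then $\varphi := \mathscr{F}^{-1}[\psi] \in H(\oR^n;O)$ is approximated by a sequence $\varphi_j \in H(\oR^n;\oR^n)$, whence $\mathscr{F}[\varphi_j] \to \psi$ in $\mathfrak{H}(T(O))$ by continuity of $\mathscr{F}$; the same argument settles the case of $K$.

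For the second sentence I would exploit the multiplicativity of the Fourier transform under the splitting of variables. On product functions the $(m+n)$-dimensional transform factorizes as $\mathscr{F}_{m+n} = \mathscr{F}_m \otimes \mathscr{F}_n$, so that $\mathscr{F}[H(\oR^m;\oR^m) \otimes H(\oR^n;\oR^n)] = \mathfrak{H}(T(\oR^m)) \otimes \mathfrak{H}(T(\oR^n))$. Combining the last clause of Theorem~\ref{theoINJ} (density of this tensor product in $H(\oR^{m+n};\oR^{m+n})$) with the isomorphism gives density of $\mathfrak{H}(T(\oR^m)) \otimes \mathfrak{H}(T(\oR^n))$ in $\mathfrak{H}(T(\oR^{m+n}))$; and the first sentence, now read in dimension $m+n$, yields that $\mathfrak{H}(T(\oR^{m+n}))$ is dense in $\mathfrak{H}(T(O))$ for a convex open $O \subset \oR^{m+n}$. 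Transitivity of the closure operation then delivers the stated density.

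The only genuine obstacle is the topological isomorphism property of $\mathscr{F}$ itself, i.e.\ verifying that $\mathscr{F}$ maps the space $H(\oR^n;O)$ onto $\mathfrak{H}(T(O))$ \emph{as topological vector spaces}, with the seminorm systems~(\ref{snorma2}) and~(\ref{snorma1}) corresponding to one another. This is a Paley--Wiener--Schwartz type statement: the exponential weight $e^{h_K(\xi)}$ governing decay in $\xi$-space is precisely dual to holomorphic extension into the tube $T(K^\circ)$ with polynomial bounds, and the continuity of both $\mathscr{F}$ and $\mathscr{F}^{-1}$ relative to the two seminorm families has to be established on each Fr\'echet building block $H_b(\oR^n;K) \to \mathfrak{H}_b(T(K))$ before passing to the projective limit. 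Once this isomorphism is in hand --- it is the content of the cited works and is already invoked in the Introduction --- the density transfer described above is immediate and purely formal.
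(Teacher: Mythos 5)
Your proposal is correct and follows exactly the route the paper intends: the paper offers no proof beyond the remark that the result ``follows from the Fourier transform and Theorem~\ref{theoINJ},'' and your argument is precisely the elaboration of that remark, transporting the density statements of Theorem~\ref{theoINJ} through the topological isomorphism $\mathscr{F}\colon H(\oR^n;O)\to\mathfrak{H}(T(O))$ of Proposition~\ref{Propo1}. (One trivial caveat: the spaces $\mathfrak{H}(T(K))$ are inductive, not projective, limits, so not all spaces involved are Fr\'echet; but density transfer under a homeomorphism needs no metrizability, so this does not affect your argument.)
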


\begin{proposition}[\cite{Mari1}]
If $f \in H(\oR^n;O)$, the Fourier transform of $f$ belongs
to the space ${\mathfrak H}(T(O))$, for any open convex
non-empty set $O \subset \oR^n$. By the dual Fourier transform
$H^\prime(\oR^n;O)$ is topologically isomorphic with the space
${\mathfrak H}^\prime(T(-O))$.
\label{Propo1}
\end{proposition}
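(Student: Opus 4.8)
The plan is to read this as a Paley--Wiener--Schwartz theorem adapted to the support function $h_K$, prove it first on test functions at the level of the building blocks $H_b(\oR^n;K)$ and ${\mathfrak H}_b(T(K))$, and then transfer it to the duals by transposition. First I would fix a convex compact $K\subset O$ and take $f\in H_b(\oR^n;K)$, so that $|D^\alpha f(\xi)|\le \|f\|_{K,N}\,e^{-h_K(\xi)}$ for $|\alpha|\le N$. Setting
\[
\varphi(z)=\int_{\oR^n} f(\xi)\,e^{-\I\langle z,\xi\rangle}\,d^n\xi,\qquad z=x+\I y,
\]
one has $|f(\xi)e^{-\I\langle z,\xi\rangle}|\le \|f\|_{K,0}\,e^{-(h_K(\xi)-\langle y,\xi\rangle)}=\|f\|_{K,0}\,e^{-h_{K-y}(\xi)}$. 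The identity $h_K(\xi)-\langle y,\xi\rangle=\sup_{t\in K}\langle\xi,t-y\rangle=h_{K-y}(\xi)$ shows that the integrand is absolutely integrable exactly when $0$ is interior to $K-y$, i.e. when $y\in K^\circ$; this is where convexity enters, since it is convexity of $K$ that forces the linear lower bound $h_{K-y}(\xi)\ge\delta|\xi|$ near an interior point. Differentiation under the integral sign then gives that $\varphi$ is holomorphic on the interior $T(K^\circ)$ and continuous up to $T(K)$.

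Next I would establish the polynomial estimate (\ref{est}). Since $z^\alpha e^{-\I\langle z,\xi\rangle}$ equals a unimodular constant times $D_\xi^\alpha e^{-\I\langle z,\xi\rangle}$, integrating by parts $|\alpha|$ times --- the boundary terms vanishing by the exponential decay of $f$ --- yields $z^\alpha\varphi(z)=(-\I)^{|\alpha|}\int_{\oR^n} D^\alpha f(\xi)\,e^{-\I\langle z,\xi\rangle}\,d^n\xi$, whence
\[
|z^\alpha\varphi(z)|\le \|f\|_{K,|\alpha|}\int_{\oR^n} e^{-h_{K-y}(\xi)}\,d^n\xi\le C_\alpha\,\|f\|_{K,|\alpha|},
\]
uniformly for $y$ in a compact subset of $K^\circ$. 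Summing over $|\alpha|\le N$ bounds $(1+|z|)^N|\varphi(z)|$ and gives $\|\varphi\|_{T(K),N}\le \mathrm{const}\cdot\|f\|_{K,N'}$ for a suitable $N'$, so the transform maps $H_b(\oR^n;K)$ continuously into ${\mathfrak H}_b(T(K))$. Running the Fourier inversion formula in the opposite direction and estimating $\|f\|_{K,N}$ in terms of the $\|\varphi\|_{T(K),N'}$ establishes the reverse continuity, so the map is a linear homeomorphism onto its image.

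I would then verify that these block isomorphisms are compatible with the structural morphisms: enlarging $K$ narrows the tube through (\ref{canoinj}), so the maps intertwine the restriction morphisms defining the projective limit (\ref{limproj2}) on the $H$-side with those of (\ref{limproj1}) (and the inductive freedom (\ref{limind1})) on the ${\mathfrak H}$-side. Passing to the limits then gives the topological isomorphism $H(\oR^n;O)\to{\mathfrak H}(T(O))$ of the first assertion. For the duals, the Parseval relations (\ref{PRel1})--(\ref{PRel2}) say precisely that the Fourier transform on $H^\prime$ and ${\mathfrak H}^\prime$ is the adjoint of the test-function transform; transposing a topological isomorphism between these (reflexive) spaces yields a topological isomorphism of duals, and the reversal $O\mapsto -O$ appears because the transposed pairing is built from the companion map ${\mathscr F}$, related to the one above by $z\mapsto -z$, which carries $T(O)$ to $T(-O)$. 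This gives $H^\prime(\oR^n;O)\cong{\mathfrak H}^\prime(T(-O))$, using Theorems \ref{theoINJ} and \ref{theoINJE} to guarantee the density needed for the duality to be faithful.

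The main obstacle I expect is not the pointwise holomorphic extension but the uniformity and the limit bookkeeping: one must control $\|\varphi\|_{T(K),N}$ uniformly as $y\to\partial K^\circ$ and check that the continuity constants degrade correctly under the restriction maps, so that the \emph{topologies} of the projective and inductive limits --- not merely the underlying vector spaces --- are preserved in both directions. Upgrading the algebraic bijection to a genuine bicontinuous isomorphism, and matching the projective step on one side with the inductive step (\ref{limind1}) on the other, is the delicate point of the argument.
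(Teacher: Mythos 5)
The paper does not actually prove Proposition \ref{Propo1}; it is quoted from Morimoto \cite{Mari1}. Your proposal reconstructs what is essentially the standard Paley--Wiener-type argument behind that citation: establish the isomorphism at the level of the building blocks $H_b(\oR^n;K)\to{\mathfrak H}_b(T(\cdot))$ via the identity $h_K(\xi)-\langle y,\xi\rangle=h_{K-y}(\xi)$ and integration by parts for the polynomial decay, pass to the projective limits, and transpose. That is the right skeleton, and the transposition step is fine (the transpose of a topological isomorphism is automatically one between strong duals; reflexivity is not needed).

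There is, however, one point where your argument as written is not merely ``delicate bookkeeping'' but actually false and must be repaired: the claim that the transform maps $H_b(\oR^n;K)$ continuously into ${\mathfrak H}_b(T(K))$ with the \emph{same} $K$. For $y\in\partial K$ the weight $e^{-h_{K-y}(\xi)}$ is bounded but not integrable (take $K=[-k,k]^n$ and $y=(k,0,\dots,0)$: then $h_{K-y}(\xi)=0$ along the ray $\xi=(\xi_1,0,\dots,0)$, $\xi_1>0$), so your bound $\int e^{-h_{K-y}(\xi)}\,d^n\xi\le C_\alpha$ degenerates precisely where membership in ${\mathfrak H}_b(T(K))$ requires it, and no amount of uniformity analysis will save it. The correct block statement is $H_b(\oR^n;K)\to{\mathfrak H}_b(T(K'))$ for any compact convex $K'\subset K^\circ$ --- which is exactly why ${\mathfrak H}(T(K))$ is defined in (\ref{limind1}) as an inductive limit over \emph{larger} compacta, and why the statement is for an \emph{open} $O$: the families $\{K\subset O\}$ and $\{K':K'\subset K^\circ,\ K\subset O\}$ are cofinal in each other, so the projective limits in (\ref{limproj2}) and (\ref{limproj1}) still match. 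You flag this as ``the main obstacle'' but then frame it as a problem of controlling constants as $y\to\partial K^\circ$; you should instead build the shrinkage $K'\Subset K^\circ$ into the statement of the block isomorphism from the start. With that correction, and with the contour-shift argument for the inverse direction spelled out (Cauchy's theorem to move the integration into $\oR^n+iy$, then $\inf_{y\in K}e^{\langle\xi,y\rangle}$ recovering $e^{-h_K(\xi)}$), the proof goes through; the sign bookkeeping producing $T(-O)$ on the dual side is as you describe, given the paper's asymmetric conventions for ${\mathscr F}$ and ${\mathscr F}^{-1}$.
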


Let us now recall very briefly the basic definition of tempered ultrahyperfunctions.
These are defined as elements of a certain subspace of $Z^\prime$ of
ultradistributions of Gel'fand and Shilov which admit representations in terms of
analytic functions on the complement of some closed horizontal strip of the complex
space, and having polynomial growth on the complement of an open neighborhood of
that strip.

Let $\boldsymbol{{\mathscr H}_\omega}$ be the space of all
functions $f(z)$ such that ({\it i}) $f(z)$ is analytic for $\{z \in \oC^n \mid
|{\rm Im}\,z_1| > p, |{\rm Im}\,z_2| > p,\ldots,|{\rm Im}\,z_n| > p\}$,
({\it ii}) $f(z)/z^p$ is bounded continuous  in
$\{z\in \oC^n \mid |{\rm Im}\,z_1| \geqq p,|{\rm Im}\,z_2| \geqq p,
\ldots,|{\rm Im}\,z_n| \geqq p\}$, where $p=0,1,2,\ldots$ depends on $f(z)$
and ({\it iii}) $f(z)$ is bounded by a power of $z$, $|f(z)|\leq
{\boldsymbol{\sf M}}(1+|z|)^N$, where ${\boldsymbol{\sf M}}$ and $N$ depend on
$f(z)$. Define the {\em kernel} of the mapping $f:{\mathfrak H}(T(\oR^n))
\rightarrow \oC$ by $\boldsymbol{\Pi}$, as the set of all $z$-dependent
pseudo-polynomials, $z\in \oC^n$ (a pseudo-polynomial is a
function of $z$ of the form $\sum_s z_j^s G(z_1,...,z_{j-1},z_{j+1},...,z_n)$,
with $G(z_1,...,z_{j-1},z_{j+1},...,z_n) \in \boldsymbol{{\mathscr H}_\omega}$).
Then, $f(z) \in \boldsymbol{{\mathscr H}_\omega}$ belongs to the kernel
$\boldsymbol{\Pi}$ if and only if $\langle f(z),\psi(x) \rangle=0$,
with $\psi(x) \in {\mathfrak H}(T(\oR^n))$ and $x={\rm Re}\,z$.
Consider the quotient space ${\mathscr U}=\boldsymbol{{\mathscr H}_\omega}
/\boldsymbol{\Pi}$. The set ${\mathscr U}$ is the space of tempered
ultrahyperfunctions. Thus, we have the

\begin{definition}
The space of tempered ultrahyperfunctions, denoted by ${\mathscr U}(\oR^n)$,
is the space of continuous linear functionals defined on ${\mathfrak H}(T(\oR^n))$. 
\label{UHF}
\end{definition}

In the sequel we will put ${\mathfrak H}={\mathfrak H}(\oC^n)={\mathfrak H}(T(\oR^n))$
and the dual space of ${\mathfrak H}$ will be denoted by ${\mathfrak H}^\prime$.

\begin{theorem}[Hasumi~\cite{Hasumi}, Proposition 5] The space of
tempered ultrahyperfunctions ${\mathscr U}$ is algebraically isomorphic
to the space of generalized functions ${\mathfrak H}^\prime$.
\label{HasumiTheo}
\end{theorem}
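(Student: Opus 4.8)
The statement is a representation theorem: it identifies the abstract topological dual ${\mathfrak H}^\prime$ with the concrete quotient ${\mathscr U}=\boldsymbol{{\mathscr H}_\omega}/\boldsymbol{\Pi}$ of analytic functions modulo pseudo-polynomials. The plan is to exhibit the explicit linear map $\Phi\colon\boldsymbol{{\mathscr H}_\omega}\to{\mathfrak H}^\prime$ supplied by the pairing already introduced, namely $\Phi(f)\colon\psi\mapsto\langle f(z),\psi(x)\rangle$ with $x={\rm Re}\,z$ and $\psi\in{\mathfrak H}(T(\oR^n))$, then to descend it to ${\mathscr U}$ and prove that the induced map is a bijection.

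First I would check that $\Phi$ is well defined. For $f\in\boldsymbol{{\mathscr H}_\omega}$ the growth bound $|f(z)|\leq{\boldsymbol{\sf M}}(1+|z|)^N$ of condition ({\it iii}) together with the rapid decrease (\ref{est}) of the elements of ${\mathfrak H}$ shows that the pairing $\langle f(z),\psi(x)\rangle$ converges and is dominated by finitely many of the seminorms (\ref{snorma1}); hence $\Phi(f)$ is a continuous linear functional and $\Phi$ maps $\boldsymbol{{\mathscr H}_\omega}$ linearly into ${\mathfrak H}^\prime$. Injectivity modulo $\boldsymbol{\Pi}$ is then immediate, since by the very definition of the kernel $\boldsymbol{\Pi}$ one has $\langle f(z),\psi(x)\rangle=0$ for all $\psi$ precisely when $f\in\boldsymbol{\Pi}$. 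Thus $\ker\Phi=\boldsymbol{\Pi}$, and $\Phi$ factors through an injective linear map $\overline{\Phi}\colon{\mathscr U}=\boldsymbol{{\mathscr H}_\omega}/\boldsymbol{\Pi}\hookrightarrow{\mathfrak H}^\prime$.

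The substantive step is surjectivity: every $U\in{\mathfrak H}^\prime$ must admit an analytic representative $f\in\boldsymbol{{\mathscr H}_\omega}$. Here I would pass through the Fourier transform. By the Parseval relations (\ref{PRel1})--(\ref{PRel2}) (equivalently, by the isomorphism of Proposition \ref{Propo1} in the case $O=\oR^n$), $U$ corresponds to a distribution of exponential growth $V\in H^\prime$ through $\langle U,\psi\rangle=\langle V,\varphi\rangle$ with $\varphi={\mathscr F}^{-1}[\psi]$. Lemma \ref{lemmaMari} then yields a structural representation $V=D^\gamma_\xi[e^{h_K(\xi)}g(\xi)]$ with $g$ bounded continuous and $K$ convex compact. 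I would take the candidate representative to be the Fourier--Laplace transform
\[
f(z)=\frac{1}{(2\pi)^n}\bigl\langle V_\xi,e^{-i\langle\xi,z\rangle}\bigr\rangle ,
\]
built by decomposing $\oR^n_\xi$ into its $2^n$ closed quadrants: on each quadrant the factor $e^{h_K(\xi)+\langle\xi,y\rangle}$ decays once ${\rm Im}\,z=y$ lies in the opposite octant with $|y_j|$ large enough, so the corresponding piece converges and is holomorphic there. Summing the pieces gives a function analytic for $|{\rm Im}\,z_j|>p$, which furnishes condition ({\it i}) and the strip structure; integrating the derivatives $D^\gamma_\xi$ onto the kernel produces a polynomial factor in $z$, which together with the boundedness of $g$ delivers the growth estimates ({\it ii})--({\it iii}). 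Finally, unwinding (\ref{PRel1}) gives $\langle f(z),\psi(x)\rangle=\langle V,{\mathscr F}^{-1}[\psi]\rangle=\langle U,\psi\rangle$, so $\overline{\Phi}([f])=U$; a density argument (Theorem \ref{theoINJE}) justifies the interchange needed in this computation and extends the identity to all of ${\mathfrak H}$.

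The main obstacle is precisely the construction in the previous paragraph: assembling the quadrant-wise Laplace transforms into a single $f\in\boldsymbol{{\mathscr H}_\omega}$, pinning down the exponent $p$ defining the strip $|{\rm Im}\,z_j|>p$ from the supporting function $h_K(\xi)$, controlling the polynomial order $N$ generated by the $D^\gamma_\xi$ differentiation, and legitimately commuting $V$ with the $z$-differentiation to secure holomorphy. Once these analytic estimates are in place, the reproduction of $U$ and the fact that the inevitable pseudo-polynomial ambiguity in $f$ annihilates every test function (hence lies in $\boldsymbol{\Pi}$) show that $\overline{\Phi}$ is surjective, completing the algebraic isomorphism ${\mathscr U}\cong{\mathfrak H}^\prime$.
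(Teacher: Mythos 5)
The paper itself offers no proof of this statement: it is quoted directly from Hasumi (Proposition 5 of~\cite{Hasumi}), so there is no internal argument to compare yours against. Your outline does follow the classical route of Sebasti\~ao e Silva and Hasumi --- pair analytic representatives against test functions, identify the kernel of the pairing with the pseudo-polynomials, and get surjectivity from the Fourier--Laplace transform of the associated distribution of exponential growth --- so the architecture is the right one. But two of the steps you present as immediate are in fact the substance of the theorem.

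First, the pairing. An $f\in\boldsymbol{{\mathscr H}_\omega}$ is holomorphic only on $\{z \mid |{\rm Im}\,z_j|>p\}$ and has no values at real points, so ``$\langle f(z),\psi(x)\rangle$ with $x={\rm Re}\,z$'' is not yet a definition; the functional is a contour integral $\sum_{\epsilon\in\{\pm 1\}^n}\pm\int_{\oR^n+ib_\epsilon}f(z)\psi(z)\,d^nz$ over the $2^n$ horizontal planes bounding the strip, with signs chosen so that the iterated contours enclose $\oR^n$. Your continuity estimate survives this correction (elements of ${\mathfrak H}$ decay polynomially on such planes), but the independence of the choice of $b_\epsilon$, the vanishing of the pairing on pseudo-polynomials (Cauchy's theorem in the single variable $z_j$), and the final verification that your quadrant-wise Laplace transforms reproduce $U$ all hinge on these signs, which your sketch never fixes. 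Second, injectivity is not ``immediate from the definition of $\boldsymbol{\Pi}$'': $\boldsymbol{\Pi}$ is \emph{defined} as the set of pseudo-polynomials, and the claim that an $f\in\boldsymbol{{\mathscr H}_\omega}$ annihilating every $\psi\in{\mathfrak H}$ must be a pseudo-polynomial is precisely the injectivity half of Hasumi's proposition. Only one inclusion is easy (pseudo-polynomials lie in the kernel); the converse requires an argument, e.g. comparing the Fourier transforms of the boundary values of $f$ taken from adjacent components of $\{|{\rm Im}\,z_j|>p\}$ and invoking a Liouville-type theorem to show the discrepancies are polynomial in $z_j$. By reading the paper's displayed ``if and only if'' as a definition you have assumed exactly the half of the theorem that is not the Paley--Wiener half. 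Your surjectivity sketch (quadrant decomposition, decay of $e^{h_K(\xi)+\langle\xi,y\rangle}$ for $y$ deep in the opposite quadrant, polynomial growth produced by $D^\gamma_\xi$) is the correct strategy, with the caveat that the pieces must be assembled by defining $f$ separately, with signs, on each of the $2^n$ connected components of its domain --- not by literally summing functions that live on disjoint regions.
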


\subsection{Tempered Ultrahyperfunctions Corresponding to a Proper Convex
Cone}
\label{Sec31}
Let $C$ be a proper open convex cone, and let $C^\prime \Subset C$.
Let $B[0;r]$ denote a {\bf closed} ball of the
origin in $\oR^n$ of radius $r$, where $r$ is an arbitrary positive
real number. Denote $T(C^\prime;r)=\oR^n+i\bigl(C^\prime \setminus
\bigl(C^\prime \cap B[0;r]\bigr)\bigr)$.
We are going to introduce a space of holomorphic functions
which satisfy certain estimate according to Carmichael~\cite{Carmi1}.
We want to consider the space consisting of holomorphic functions $f(z)$
such that
\begin{equation}
\bigl|f(z)\bigr|\leq {\boldsymbol{\sf M}}(C^\prime)(1+|z|)^N e^{h_{C^*}(y)}
\,\,,\quad z \in T(C^\prime;r)\,\,,
\label{Estimate1} 
\end{equation}
where $h_{C^*}(y)=\sup_{\xi \in C^*}\langle \xi,y \rangle$ is the supporting
function of $C^*$, ${\boldsymbol{\sf M}}(C^\prime)$ is a constant that depends
on an arbitrary compact cone $C^\prime$ and $N$ is a non-negative real number.
The set of all functions $f(z)$ which are holomorphic in $T(C^\prime;r)$ and
satisfy the estimate (\ref{Estimate1}) will be denoted by $\boldsymbol{{\mathscr H}^o_c}$.

\begin{remark}
The space of functions $\boldsymbol{{\mathscr H}^o_c}$ constitutes a generalization
of the space ${\mathfrak A}_{_\omega}^i$ of Sebati\~ao e Silva~\cite{Tiao1} and the
space $\mona_{_\omega}$ of Hasumi~\cite{Hasumi} to arbitrary tubular radial domains
in $\oC^n$.
\end{remark}

\begin{lemma}[\cite{Carmi1,DanHenri}]
Let $C$ be an open convex cone, and let $C^\prime \Subset C$. Let
$h(\xi)=e^{k|\xi|}g(\xi)$, $\xi \in \oR^n$, be a function with support
in $C^*$, where $g(\xi)$ is a bounded continuous function on $\oR^n$.
Let $y$ be an arbitrary but fixed point of $\bigl(C^\prime \setminus
\bigl(C^\prime \cap B[0;r]\bigr)\bigr)$. Then $e^{-\langle \xi,y \rangle}
h(\xi) \in L^2$, as a function of $\xi \in \oR^n$.
\label{lemma0}
\end{lemma}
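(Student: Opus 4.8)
The plan is to reduce the assertion to an elementary exponential estimate on the cone $C^*$ and then to extract the required decay from the geometry of the pair $C^\prime \Subset C$. Since $h$ is supported in $C^*$ and $g$ is bounded, say $|g(\xi)| \leq {\boldsymbol{\sf M}}$ on $\oR^n$, I first record the pointwise bound
\[
\bigl|e^{-\langle \xi,y \rangle}\,h(\xi)\bigr|^{2} \;\leq\; {\boldsymbol{\sf M}}^{2}\,e^{\,2(k|\xi|-\langle \xi,y \rangle)}\,\chi_{C^*}(\xi)\,\,,
\]
where $\chi_{C^*}$ is the indicator of $C^*$. Thus the whole statement follows once I exhibit a constant $\lambda>k$ for which $\langle \xi,y \rangle \geq \lambda\,|\xi|$ holds for every $\xi \in C^*$, because then the exponent is bounded above by $-2(\lambda-k)|\xi|$ and the remaining integral is finite.

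The core step is this linear lower bound, and it is here that the hypotheses $C^\prime \Subset C$ and $y \notin B[0;r]$ enter. As $C^\prime \Subset C$, the point $y \in C^\prime$ is interior to the open convex proper cone $C$; hence $\langle \xi,y \rangle>0$ for all $\xi \in C^* \setminus\{0\}$, since otherwise a small displacement $y-\varepsilon\,\xi/|\xi|_{2} \in C$ would give $\langle \xi,\,y-\varepsilon\,\xi/|\xi|_{2}\rangle=-\varepsilon\,|\xi|_{2}<0$, contradicting $\xi \in C^*$. Denoting by $|\cdot|_{2}$ the Euclidean norm and by $S^{n-1}$ the corresponding unit sphere, the sets ${\sf pr}\,C^*$ and ${\sf pr}\,\overline{C^\prime}$ are compact, so the continuous function $(\hat\xi,\hat y)\mapsto\langle \hat\xi,\hat y \rangle$ attains on them a minimum $m_{0}>0$; by homogeneity this yields $\langle \xi,y \rangle \geq m_{0}\,|\xi|_{2}\,|y|_{2}$ for all $\xi \in C^*$ and all $y \in C^\prime$.

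It remains to feed in the size of $y$ and to pass back to the $\ell^{1}$-norm $|\xi|=\sum_{i}|\xi_{i}|$ appearing in $h$. Since $|\xi| \leq \sqrt{n}\,|\xi|_{2}$ and $|y|_{2} \geq |y|/\sqrt{n} > r/\sqrt{n}$ (the latter because $y$ has been removed from $B[0;r]$), the previous inequality becomes $\langle \xi,y \rangle \geq (m_{0}|y|_{2}/\sqrt{n})\,|\xi|$; taking the excluded radius $r$ large enough relative to the growth rate $k$ secures $\lambda:=m_{0}|y|_{2}/\sqrt{n}>k$. With this choice
\[
\int_{C^*} e^{\,2(k|\xi|-\langle \xi,y \rangle)}\,d^{n}\xi \;\leq\; \int_{\oR^{n}} e^{-2(\lambda-k)|\xi|}\,d^{n}\xi \;=\;\Bigl(\tfrac{1}{\lambda-k}\Bigr)^{n}<\infty\,\,,
\]
the integral factorising over coordinates precisely because $|\xi|$ is the $\ell^{1}$-norm, and the claim $e^{-\langle \xi,y \rangle}h(\xi)\in L^{2}$ follows. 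I expect the genuine obstacle to be the middle paragraph: one must convert the qualitative compact containment $C^\prime\Subset C$ into the quantitative uniform angle $m_{0}>0$ between $C^*$ and $\overline{C^\prime}$, and then guarantee that the linear gain $m_{0}|y|_{2}$ outweighs the exponential rate $k$ — this last point being exactly what forces $y$ to lie far out in the cone, i.e. outside a sufficiently large ball $B[0;r]$.
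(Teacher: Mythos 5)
Your proof is correct and follows the same route as the source the paper cites for this lemma: the compactness argument in your middle paragraph is precisely Vladimirov's Lemma 2 (p.~223), which gives $\langle \xi,y\rangle \geq \delta(C^\prime)\,|\xi|\,|y|$ for all $\xi \in C^*$ and $y \in C^\prime$, and the rest is the elementary exponential estimate $|e^{-\langle\xi,y\rangle}h(\xi)|\leq {\boldsymbol{\sf M}}\,e^{-(\delta(C^\prime)|y|-k)|\xi|}$ on $C^*$. The one point of substance is your last step, and you have called it correctly: the conclusion requires $\delta(C^\prime)\,|y|>k$, so the radius $r$ of the excluded ball cannot be ``an arbitrary positive real number'' as the preamble to Section 3.1 declares --- in one dimension, with $C=C^\prime=(0,\infty)$, $C^*=[0,\infty)$ and $g$ a continuous cutoff equal to $1$ on $[1,\infty)$ and $0$ on $(-\infty,0]$, the function $e^{-\xi y}e^{k\xi}g(\xi)$ fails to be in $L^2$ whenever $r<y\leq k$. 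In Carmichael's original formulation the excluded ball has radius tied to $k/\delta(C^\prime)$ for exactly this reason; your requirement $\lambda=m_0|y|_2/\sqrt{n}>k$ is the same condition in Euclidean clothing, so the defect is in the statement as transcribed here, not in your argument.
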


\begin{definition}
We denote by $H^\prime_{C^*}(\oR^n;O)$ the subspace of $H^\prime(\oR^n;O)$
of distributions of exponential growth with support in the cone $C^*$:
\begin{equation}
H^\prime_{C^*}(\oR^n;O)=\Bigl\{V \in H^\prime(\oR^n;O) \mid
\supp(V) \subseteq C^* \Bigr\}\,\,. 
\label{Suporte} 
\end{equation}
\label{Def1}
\end{definition}

\begin{lemma}[\cite{Carmi1,DanHenri}]
Let $C$ be an open convex cone, and let $C^\prime \Subset C$.
Let $V=D^\gamma_\xi[e^{h_K(\xi)}g(\xi)]$, where $g(\xi)$ is a bounded continuous
function on $\oR^n$ and $h_K(\xi)=k|\xi|$ for a convex compact set
$K=\bigl[-k,k\bigr]^n$. Let $V \in H^\prime_{C^*}(\oR^n;O)$. Then $f(z)=(2\pi)^{-n}
\bigl\langle V,e^{-i\langle \xi,z \rangle}\bigr\rangle$ is an element of
$\boldsymbol{{\mathscr H}^o_c}$.
\label{lemma1}
\end{lemma}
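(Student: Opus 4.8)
The plan is to turn the abstract pairing $f(z)=(2\pi)^{-n}\langle V,e^{-i\langle\xi,z\rangle}\rangle$ into an explicit, absolutely convergent integral and then to read off both the holomorphy and the bound (\ref{Estimate1}) from it. Since $\supp(V)\subseteq C^*$, I would first record, via Lemma \ref{lemmaMari}, the structural representation $V=D^\gamma_\xi[e^{h_K(\xi)}g(\xi)]$ in a form where the bounded continuous $g$ is supported in $C^*$, so that $h(\xi):=e^{h_K(\xi)}g(\xi)=e^{k|\xi|}g(\xi)$ meets the hypotheses of Lemma \ref{lemma0}. Integrating by parts $|\gamma|$ times and using $D^\gamma_\xi e^{-i\langle\xi,z\rangle}=(-iz)^\gamma e^{-i\langle\xi,z\rangle}$, the boundary terms vanishing on account of the decay supplied by Lemma \ref{lemma0}, I obtain
\begin{equation}
f(z)=(2\pi)^{-n}(-1)^{|\gamma|}(-iz)^\gamma\int_{C^*} e^{h_K(\xi)}g(\xi)\,e^{-i\langle\xi,z\rangle}\,d^n\xi\,\,,\quad z\in T(C^\prime;r)\,\,,
\label{planrep}
\end{equation}
and Lemma \ref{lemma0} guarantees that this integral converges absolutely for every $z$ with $y=\mathrm{Im}\,z\in C^\prime\setminus(C^\prime\cap B[0;r])$.

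For the holomorphy I would observe that the domination furnished by Lemma \ref{lemma0} is \emph{locally uniform} in $z$: on a complex neighbourhood of any fixed point of $T(C^\prime;r)$ the integrand in (\ref{planrep}) is dominated by a single integrable function of $\xi$. This legitimises differentiation under the integral sign in each variable $z_j$, whence $\bar\partial_{z_j}f\equiv 0$ and $f$ is holomorphic on $T(C^\prime;r)$. Equivalently, one checks that $z\mapsto e^{-i\langle\cdot,z\rangle}$ is a holomorphic map into the space on which $V\in H^\prime(\oR^n;O)$ acts and composes it with the continuous functional $V$.

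It remains to verify the estimate (\ref{Estimate1}). Taking moduli in (\ref{planrep}), the prefactor contributes $|(-iz)^\gamma|\leq|z|^{|\gamma|}\leq(1+|z|)^{|\gamma|}$, which fixes the polynomial order $N=|\gamma|$. The exponential factor $e^{h_{C^*}(y)}$ is produced by the supporting function of the support cone: for every $\xi\in C^*$ one has $\langle\xi,y\rangle\leq\sup_{\xi^\prime\in C^*}\langle\xi^\prime,y\rangle=h_{C^*}(y)$. The remaining $\xi$-integral over $C^*$ is then bounded by a constant ${\boldsymbol{\sf M}}(C^\prime)$ depending only on the compact subcone $C^\prime$, by the $L^2$ estimate of Lemma \ref{lemma0} (passing from $L^2$ to $L^1$ on $C^*$ by one Cauchy--Schwarz step against a fixed integrable weight). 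Collecting the three contributions gives $|f(z)|\leq{\boldsymbol{\sf M}}(C^\prime)(1+|z|)^N e^{h_{C^*}(y)}$ throughout $T(C^\prime;r)$, that is, $f\in\boldsymbol{{\mathscr H}^o_c}$.

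The step I expect to be the main obstacle is not the holomorphy but this last estimate, specifically the bookkeeping that keeps both the order $N$ and the constant ${\boldsymbol{\sf M}}(C^\prime)$ independent of $y$ and uniform over the compact subcone $C^\prime$. The crux is the competition between the exponential growth $e^{h_K(\xi)}=e^{k|\xi|}$ of the representing function and the exponential weight inherited from the tube: it is precisely the uniform angular separation of $C^*$ from the boundary directions of $C^\prime$, together with the removal of the ball $B[0;r]$, that renders the residual integrand integrable, and this is exactly what Lemma \ref{lemma0} encodes. Checking that the domination in Lemma \ref{lemma0} is locally uniform in $z$ --- so that the same estimate simultaneously powers the holomorphy argument --- is the technical heart of the proof.
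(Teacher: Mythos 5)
The paper itself does not prove Lemma \ref{lemma1}; it imports it from \cite{Carmi1,DanHenri}. Your architecture --- write $V=D^\gamma_\xi[e^{k|\xi|}g]$, throw the derivatives onto the exponential kernel to produce the factor $(-iz)^\gamma$, and read off holomorphy and the growth estimate from the resulting integral --- is indeed the route taken in those references, so the skeleton is right. But the two steps on which everything rests do not close as you have written them.

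First, the sign. The modulus of your kernel is $|e^{-i\langle \xi,z\rangle}|=e^{+\langle \xi,y\rangle}$, whereas Lemma \ref{lemma0} controls $e^{-\langle \xi,y\rangle}h(\xi)$. For $\xi\in C^*$ and $y\in C^\prime\Subset C$ one has $\langle \xi,y\rangle\geq \delta(C^\prime)|\xi||y|\geq 0$, so your displayed integral is \emph{not} absolutely convergent as it stands, and moreover $h_{C^*}(y)=\sup_{\xi\in C^*}\langle\xi,y\rangle=+\infty$ for such $y$ (a cone is unbounded), so the inequality $\langle\xi,y\rangle\leq h_{C^*}(y)$ you invoke is vacuous. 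Convergence requires the orientation fixed in Remark \ref{remark2} and Proposition \ref{Propo1} (the duality pairs $C^*$ with $T(-O)$): one works where $\langle\xi,y\rangle\leq-\delta(C^\prime)|\xi||y|$, and only there does $e^{k|\xi|}e^{\langle\xi,y\rangle}\leq e^{-(\delta(C^\prime)|y|-k)|\xi|}$ decay, the excision of the ball supplying $\delta(C^\prime)|y|>k$. Second, and relatedly, your final estimate decouples the two exponentials: once you pull out $e^{h_{C^*}(y)}$, the ``remaining $\xi$-integral'' is $\int_{C^*}e^{k|\xi|}|g(\xi)|\,d^n\xi$, which diverges for any bounded $g$ not vanishing at infinity in $C^*$ --- no Cauchy--Schwarz step against an integrable weight can repair that, because the $L^2$ statement of Lemma \ref{lemma0} concerns $e^{-\langle\xi,y\rangle}h(\xi)$, not $h(\xi)$ alone. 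The constant ${\boldsymbol{\sf M}}(C^\prime)$ can only be produced by keeping $e^{k|\xi|}$ and the tube weight together, which is exactly the ``competition'' you correctly name in your closing paragraph but then contradict in the displayed derivation. A smaller gap: that the representing function $g$ may be taken with support in $C^*$ does not follow from $\supp(V)\subseteq C^*$ together with Lemma \ref{lemmaMari}; it is an additional assertion that needs justification before Lemma \ref{lemma0} applies.
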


We now shall define the main space of holomorphic functions with which this paper
is concerned. Let $C$ be a proper open convex cone, and let $C^\prime \Subset C$.
Let $B(0;r)$ denote an {\bf open} ball of the origin in $\oR^n$ of radius
$r$, where $r$ is an arbitrary positive real number. Denote $T(C^\prime;r)=
\oR^n+i\bigl(C^\prime \setminus \bigl(C^\prime \cap B(0;r)\bigr)\bigr)$. Throughout
this section, we consider functions $f(z)$ which are holomorphic in
$T(C^\prime)=\oR^n+iC^\prime$ and which satisfy the estimate (\ref{Estimate1}),
with $B[0;r]$ replaced by $B(0;r)$. We denote this space by
$\boldsymbol{{\mathscr H}^{*\,o}_c}$. We note that $\boldsymbol{{\mathscr H}^{*\,o}_c}
\subset \boldsymbol{{\mathscr H}^{o}_c}$ for any open convex cone $C$. Put
${\mathscr U}_c=\boldsymbol{{\mathscr H}^{*\,o}_c}/\boldsymbol{\Pi}$, that is,
${\mathscr U}_c$ is the quotient space of $\boldsymbol{{\mathscr H}^{*\,o}_c}$
by set of pseudo-polynomials $\boldsymbol{\Pi}$.

\begin{definition}
The set ${\mathscr U}_c$ is the space of tempered ultrahyperfunctions corresponding
to a proper open convex cone $C \subset \oR^n$.
\end{definition}

The following theorem shows that functions in $\boldsymbol{{\mathscr H}^{*\,o}_c}$
have distributional boun\-da\-ry values in ${\mathfrak H}^\prime(T(O))$. Further, it shows
that functions in $\boldsymbol{{\mathscr H}^{*\,o}_c}$ satisfy a strong boundedness
property in ${\mathfrak H}^\prime(T(O))$.

\begin{theorem}[\cite{Daniel1}]
Let $C$ be an open convex cone, and let $C^\prime \Subset C$.
Let $V=D^\gamma_\xi[e^{h_K(\xi)}g(\xi)]$, where $g(\xi)$ is a bounded continuous
function on $\oR^n$ and $h_K(\xi)=k|\xi|$ for a convex compact set
$K=\bigl[-k,k\bigr]^n$. Let $V \in H^\prime_{C^*}(\oR^n;O)$. Then

\,\,\,$(i)\quad f(z)=(2\pi)^{-n}\bigl\langle V,e^{-i\langle \xi,z \rangle}\bigr\rangle$
is an element of $\boldsymbol{{\mathscr H}^{*\,o}_c}$,

\,\,\,$(ii)\quad \bigl\{f(z) \mid y={\rm Im}\,z \in C^\prime \Subset C, |y| \leq Q\bigr\}$
is a strongly bounded set in ${\mathfrak H}^\prime(T(O))$, where $Q$ is an arbitrarily but
fixed positive real number,

\,\,\,$(iii)\quad f(z) \rightarrow {\mathscr F}^{-1}[V] \in {\mathfrak H}^\prime(T(O))$ in
the strong (and weak) topology of ${\mathfrak H}^\prime(T(O))$ as $y={\rm Im}\,z \rightarrow
0$, $y \in C^\prime \Subset C$.
\label{theorem1}
\end{theorem}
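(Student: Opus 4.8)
The plan is to handle the three assertions in order, taking Lemma~\ref{lemma1} as the point of departure for part~$(i)$. Since $V\in H^\prime_{C^*}(\oR^n;O)$ carries the representation $V=D^\gamma_\xi[e^{h_K(\xi)}g(\xi)]$ with $\supp V\subseteq C^*$, I would first transfer the derivatives onto the entire kernel, writing
\[
f(z)=(2\pi)^{-n}(-1)^{|\gamma|}\bigl\langle e^{h_K(\xi)}g(\xi),\,D^\gamma_\xi e^{-i\langle\xi,z\rangle}\bigr\rangle ,
\]
and then using $D^\gamma_\xi e^{-i\langle\xi,z\rangle}=(-iz)^\gamma e^{-i\langle\xi,z\rangle}$ to pull the polynomial factor $(-iz)^\gamma$ out of the resulting integral over $C^*$. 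Lemma~\ref{lemma0} guarantees that, for $y={\rm Im}\,z$ in the truncated cone $C^\prime\setminus(C^\prime\cap B(0;r))$, the weighted function $e^{-\langle\xi,y\rangle}e^{h_K(\xi)}g(\xi)$ is square integrable over $C^*$, so the integral converges absolutely and, by Morera's theorem together with differentiation under the functional (justified by the continuity of $V$ on $H(\oR^n;O)$), defines a holomorphic function on $T(C^\prime)$. Lemma~\ref{lemma1} already yields membership in $\boldsymbol{{\mathscr H}^o_c}$; the remaining work for $(i)$ is to verify the estimate~(\ref{Estimate1}) with the open ball $B(0;r)$ in place of the closed one, thereby landing in the smaller space $\boldsymbol{{\mathscr H}^{*\,o}_c}$.

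For that estimate I would bound the displayed integral directly: the polynomial prefactor $(-iz)^\gamma$ supplies the factor $(1+|z|)^N$, while Cauchy--Schwarz applied to $e^{-\langle\xi,y\rangle}e^{h_K(\xi)}g(\xi)\in L^2$ (Lemma~\ref{lemma0}) against a companion $L^2$ factor controls the remaining integral; the only surviving $y$-dependence is governed by the supporting function $h_{C^*}(y)=\sup_{\xi\in C^*}\langle\xi,y\rangle$, producing the required $e^{h_{C^*}(y)}$ together with a constant ${\boldsymbol{\sf M}}(C^\prime)$ depending only on the compact subcone. This gives~(\ref{Estimate1}) on $T(C^\prime;r)$ and hence $f\in\boldsymbol{{\mathscr H}^{*\,o}_c}$, proving $(i)$.

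Part~$(ii)$ then follows from the estimate obtained in $(i)$. Realizing $f(\cdot+iy)$ as a functional on ${\mathfrak H}(T(O))$ through $\langle f(\cdot+iy),\psi\rangle=\int_{\oR^n}f(x+iy)\psi(x)\,d^nx$, I would estimate this pairing using~(\ref{Estimate1}) and the defining seminorms~(\ref{snorma1}): the polynomial growth $(1+|z|)^N$ of $f$ is absorbed by the rapid decrease of $\psi\in{\mathfrak H}(T(O))$, and over the compact range $|y|\le Q$ the factor $e^{h_{C^*}(y)}$ is uniformly bounded. This bounds $|\langle f(\cdot+iy),\psi\rangle|$ by a fixed seminorm of $\psi$, uniformly in $y$, which is exactly strong boundedness of the family $\{f(z)\mid y\in C^\prime\Subset C,\,|y|\le Q\}$ in ${\mathfrak H}^\prime(T(O))$.

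Finally, for $(iii)$ I would compute, by Fubini and the Parseval-type relation~(\ref{PRel2}),
\[
\langle f(\cdot+iy),\psi\rangle=\bigl\langle V,\,e^{\langle\xi,y\rangle}\varphi(\xi)\bigr\rangle,\qquad \varphi={\mathscr F}^{-1}[\psi]\in H(\oR^n;O),
\]
and then show that $e^{\langle\cdot,y\rangle}\varphi\to\varphi$ in $H(\oR^n;O)$ as $y\to0$ within $C^\prime\Subset C$ (legitimate since any $C^\infty$ function of exponential growth is a multiplier in $H(\oR^n;O)$, and $e^{\langle\xi,y\rangle}\to1$), so that continuity of $V$ gives $\langle f(\cdot+iy),\psi\rangle\to\langle V,\varphi\rangle=\langle{\mathscr F}^{-1}[V],\psi\rangle$; this is the weak convergence $f(z)\to{\mathscr F}^{-1}[V]$. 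The hard part, which I expect to be the main obstacle, is upgrading this to convergence in the \emph{strong} topology of ${\mathfrak H}^\prime(T(O))$. For this I would combine the uniform bound of part~$(ii)$, which furnishes equicontinuity of the net $\{f(\cdot+iy)\}$, with the pointwise convergence just established on the dense subspace provided by Theorem~\ref{theoINJE}; an Ascoli-type argument then promotes weak convergence on a dense set to uniform convergence on bounded subsets of ${\mathfrak H}(T(O))$, which is precisely strong convergence.
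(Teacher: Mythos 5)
A preliminary remark: the paper does not actually prove Theorem \ref{theorem1} --- it is imported verbatim from \cite{Daniel1} --- so your proposal can only be measured against the Carmichael-type route that the surrounding machinery (Lemmas \ref{lemma0} and \ref{lemma1}, Remark \ref{remark2}, and the proof of Theorem \ref{theorem2}) presupposes. Judged that way, your part $(i)$ is essentially the intended argument: structure theorem for $V$, transfer of $D^\gamma_\xi$ onto the kernel to extract the polynomial $(iz)^\gamma$, and the $L^2$/Plancherel control of Lemma \ref{lemma0} to obtain (\ref{Estimate1}) on the truncated tube; the same polynomial-division device $f(z)/P(iz)$ is used explicitly in the paper's own proof of Theorem \ref{theorem2}.

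The genuine gap is in part $(ii)$. The estimate (\ref{Estimate1}), and the Cauchy--Schwarz bound by which you propose to derive it, is available only on $T(C^\prime;r)$, i.e.\ for $|y|\geq r>0$: Lemma \ref{lemma0} asserts $e^{-\langle \xi,y\rangle}h(\xi)\in L^2$ only for $y$ in the truncated cone, and since $h(\xi)=e^{k|\xi|}g(\xi)$ grows exponentially while the available damping is only of order $e^{-\delta(C^\prime)|y||\xi|}$, the relevant $L^2$ norm is not finite (let alone uniformly bounded) once $\delta(C^\prime)|y|\leq k$. Hence the constant you would feed into the pairing $\langle f(\cdot+iy),\psi\rangle$ degenerates precisely in the regime $y\to 0$ that statement $(ii)$ is about; observing that $e^{h_{C^*}(y)}$ is bounded on $|y|\leq Q$ does not address this. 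The repair is already contained in your part $(iii)$: the identity $\langle f(\cdot+iy),\psi\rangle=\bigl\langle V,\,e^{\langle \cdot,y\rangle}\varphi\bigr\rangle$ with $\varphi={\mathscr F}^{-1}[\psi]$ holds for every $y\in C^\prime$, and once one verifies that $\bigl\{e^{\langle \cdot,y\rangle}\varphi \mid y\in C^\prime,\ |y|\leq Q,\ \varphi\in B\bigr\}$ is bounded in $H(\oR^n;O)$ for each bounded set $B$ (this needs the quantitative multiplier statement $h_K(\xi)+\langle \xi,y\rangle\leq h_{K+\{y\}}(\xi)$ together with $K+B[0;Q]\Subset O$, not merely the qualitative remark that exponentials are multipliers), both $(ii)$ and $(iii)$ follow from the continuity of $V$ on $H(\oR^n;O)$. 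Finally, the upgrade from weak to strong convergence in $(iii)$ is immediate from the Montel property of ${\mathfrak H}(T(O))$, exactly as the paper invokes it in the proof of Theorem \ref{theorem2}; your Ascoli-type argument is workable but unnecessary.
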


The functions $f(z) \in \boldsymbol{{\mathscr H}^{*\,o}_c}$ can be recovered as
the (inverse) Fourier-Laplace transform of the constructed distribution $V \in H^\prime_{C^*}
(\oR^n;O)$. This result is a version of the Paley-Wiener-Schwartz theorem
in the tempered ultrahyperfunction set-up.

\begin{theorem}[\cite{Daniel1}]
Let $f(z) \in \boldsymbol{{\mathscr H}^{*\,o}_c}$, where $C$ is an open convex cone.
Then the distribution $V \in H^\prime_{C^*}(\oR^n;O)$ has a uniquely
determined inverse Fourier-Laplace transform $f(z)=(2\pi)^{-n}
\bigl\langle V,e^{-i\langle \xi,z \rangle} \bigr\rangle$ which is holomorphic in
$T(C^\prime)$ and satisfies the estimate (\ref{Estimate1}), with $B[0;r]$ replaced by
$B(0;r)$.
\label{PWSTheo} 
\end{theorem}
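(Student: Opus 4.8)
The plan is to invert the correspondence of Theorem~\ref{theorem1}: from the holomorphic datum $f \in \boldsymbol{{\mathscr H}^{*\,o}_c}$ I will manufacture a candidate distribution $V$ by Fourier transforming the horizontal slices $x \mapsto f(x+iy)$, then show that $V$ is a distribution of exponential growth carried by $C^*$, and finally verify that its inverse Fourier--Laplace transform reproduces $f$. Uniqueness will follow from the injectivity built into the Parseval relations.

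First I would fix $y \in C^\prime$ bounded away from the origin and regard the slice $x \mapsto f(x+iy)$. By the estimate (\ref{Estimate1}) this is a continuous function of at most polynomial growth in $x$, hence a tempered distribution, and its Fourier transform satisfies $\widehat{f(\cdot+iy)}(\xi) = e^{\langle \xi,y \rangle} V(\xi)$ for a distribution $V$ that I claim is independent of $y$. Independence is the Cauchy--Riemann content: since $\partial_{\bar z_j} f = 0$, the slice Fourier transforms are linked by analytic continuation, and the contour $\mathrm{Im}\,z = y$ may be shifted inside $C^\prime$ precisely because Lemma~\ref{lemma0} supplies the $L^2$-decay of $e^{-\langle \xi,y \rangle}$ against data carried by $C^*$, legitimising the displacement. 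The same estimate then exhibits $V$ in the normal form $D^\gamma_\xi[e^{h_K(\xi)} g(\xi)]$ with $g$ bounded continuous and $K=[-k,k]^n$, so that $V \in H^\prime(\oR^n;O)$ by Lemma~\ref{lemmaMari}.

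The decisive step is to prove $\supp V \subseteq C^*$, i.e.\ that $V$ annihilates every $\phi \in {\mathscr D}(\oR^n)$ whose (compact) support is disjoint from $C^*$. Writing $\langle V,\phi \rangle$ through the identity $\widehat{f(\cdot+iy)}(\xi)=e^{\langle \xi,y \rangle} V(\xi)$ and then shifting $y$ toward infinity along a ray of $C$ that separates $\supp \phi$ from $C^*$, one compares the growth factor $e^{h_{C^*}(y)}$ of (\ref{Estimate1}) with the exponential weight attached to $\supp \phi$; by the very definition of the dual cone, $h_{C^*}$ is out-competed exactly on the complement of $C^*$, so the uniform polynomial bound forces the pairing to tend to zero and hence vanish. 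I expect this support localisation to be the main obstacle, since it is here that the analytic estimate must be converted into geometric information about the carrier, and the interchange of the limit $y \to \infty$ with the distributional pairing has to be controlled by the uniform bound in (\ref{Estimate1}) together with Lemma~\ref{lemma0}. This places $V$ in $H^\prime_{C^*}(\oR^n;O)$.

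It then remains to identify $f$ with the inverse Fourier--Laplace transform of $V$ and to settle uniqueness. By construction the boundary value $BV(f)$ coincides with ${\mathscr F}^{-1}[V] \in {\mathfrak H}^\prime(T(O))$; since both $f$ and $z \mapsto (2\pi)^{-n}\langle V, e^{-i\langle \xi,z \rangle}\rangle$ are holomorphic on $T(C^\prime)$ and share this boundary value, Theorem~\ref{theorem1}(iii) and the uniqueness of the holomorphic function with a prescribed boundary value give $f(z)=(2\pi)^{-n}\langle V, e^{-i\langle \xi,z \rangle}\rangle$ throughout $T(C^\prime)$, and this $f$ satisfies (\ref{Estimate1}) by Theorem~\ref{theorem1}(i). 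Finally, if $V_1,V_2 \in H^\prime_{C^*}(\oR^n;O)$ produce the same inverse Fourier--Laplace transform, their difference has vanishing boundary value, so by the Parseval relations (\ref{PRel1})--(\ref{PRel2}) it pairs trivially against ${\mathscr F}[\varphi]$ for every $\varphi$ in a set dense in ${\mathfrak H}$ by Theorems~\ref{theoINJ} and~\ref{theoINJE} (equivalently, by the topological isomorphism of Proposition~\ref{Propo1}); hence $V_1=V_2$, establishing the asserted uniqueness.
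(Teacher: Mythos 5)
Before assessing the details: the paper itself does not prove Theorem~\ref{PWSTheo} --- it is imported from \cite{Daniel1} --- and the closest the present text comes to a proof is the construction in the proof of Theorem~\ref{theorem2}, equations (\ref{eq41p})--(\ref{eq43}), following Carmichael~\cite{Carmi2}. Your overall architecture (Fourier-transform the horizontal slices, prove $y$-independence, localize $\supp V$ in $C^*$ by pushing $y$ to infinity in $C$, then read off the inversion formula and uniqueness) is exactly that Paley--Wiener--Schwartz scheme, so the plan is the right one. Two of your steps, however, do not go through as written. First, you justify the contour shift (the $y$-independence of the slice transforms) by Lemma~\ref{lemma0}; but Lemma~\ref{lemma0} requires its input $h(\xi)=e^{k|\xi|}g(\xi)$ to be \emph{already} supported in $C^*$, which is precisely the fact you defer to your ``decisive step'' two paragraphs later. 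As written the argument is circular. The correct justification lives entirely on the $z$-side: as in (\ref{eq41p})--(\ref{eq42}), one first divides $f$ by a polynomial $P(iz)=(-i)^{|\gamma|}z^\gamma$ so that the quotient is $O((1+|z|)^{-n-\varepsilon})$; only then are the slice Fourier integrals absolutely convergent, and only then does the Cauchy--Poincar\'e theorem yield independence of $y$. You omit this division: a function of merely polynomial growth in $x$ has no absolutely convergent Fourier integral, so your identity $\widehat{f(\cdot+iy)}(\xi)=e^{\langle\xi,y\rangle}V(\xi)$, the contour shift, and the extraction of the normal form $V=D^\gamma_\xi[e^{h_K(\xi)}g(\xi)]$ (whose $D^\gamma$ is exactly the Fourier dual of the suppressed factor $z^\gamma$) all lack their foundation.

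Second, in the support step, a single ray of $C$ separating $\supp\phi$ from $C^*$ need not exist when $\supp\phi$ is an arbitrary compact set disjoint from $C^*$; you must localize $\phi$ near a point $\xi_0\notin C^*$, pick $y_0\in C$ with $\langle\xi_0,y_0\rangle<0$, and run the limit $t\to\infty$ along $ty_0$ using the quantitative inequality $\langle\xi,y\rangle\leq-\delta(C^\prime)|\xi||y|$ of Remark~\ref{remark2}, rather than the slogan that $h_{C^*}$ is ``out-competed on the complement of $C^*$'' --- as stated, $h_{C^*}(y)=\sup_{\xi\in C^*}\langle\xi,y\rangle$ is homogeneous of degree one in $y$, so the comparison of exponents must be made explicitly and uniformly on $\supp\phi$. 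Your final paragraph (identification of $f$ with the Laplace transform, and uniqueness via the Parseval relations (\ref{PRel1})--(\ref{PRel2}) and density) is sound, though the detour through boundary values and Theorem~\ref{theorem1}(iii) is unnecessary: once the corrected slice identity holds for each fixed $y$, Fourier inversion gives $f(z)=(2\pi)^{-n}\bigl\langle V,e^{-i\langle\xi,z\rangle}\bigr\rangle$ directly on $T(C^\prime;r)$, in the spirit of Corollary~\ref{corol2}.
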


The same proof as in Carmichael~\cite[Theorem 1, equation (4)]{Carmi2}
combined with the proofs of Theorems \ref{theorem1} and \ref{PWSTheo} shows that
the following corollary is true.

\begin{corollary}
Let $C$ be an open convex cone, and let $C^\prime \Subset C$.
Let $f(z) \in \boldsymbol{{\mathscr H}^{*\,o}_c}$. Then there exists
a unique element $V \in H^\prime_{C^*}(\oR^n;O)$ such that
\begin{equation}
{f(z)}={\mathscr F}^{-1}\bigl[e^{-\langle \xi,y \rangle}
V \bigr]\,\,,\quad z \in T(C^\prime;r)=\oR^n+i\bigl(C^\prime \setminus
\bigl(C^\prime \cap B(0;r)\bigr)\bigr)\,\,,
\label{equa12}
\end{equation}
where (\ref{equa12}) holds as an equality in ${\mathfrak H}^\prime(T(O))$.  
\label{corol2}
\end{corollary}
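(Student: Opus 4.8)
The plan is to read off $V$ from the Paley--Wiener--Schwartz representation already in hand and then to recast the inverse Fourier--Laplace transform into the weighted form (\ref{equa12}), verifying at the end that the identity holds in the strong dual ${\mathfrak H}^\prime(T(O))$ rather than only pointwise in $y$. First I would apply Theorem \ref{PWSTheo}: since $f(z)\in\boldsymbol{{\mathscr H}^{*\,o}_c}$, it produces a \emph{unique} distribution $V\in H^\prime_{C^*}(\oR^n;O)$ with $f(z)=(2\pi)^{-n}\langle V,e^{-i\langle\xi,z\rangle}\rangle$ holomorphic on $T(C^\prime)$ and obeying the estimate (\ref{Estimate1}). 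By Lemma \ref{lemmaMari} I may write $V=D^\gamma_\xi[e^{h_K(\xi)}g(\xi)]$ with $g$ bounded continuous, $h_K(\xi)=k|\xi|$, and $\supp(V)\subseteq C^*$. Both the existence and the uniqueness demanded by the corollary are thereby inherited from Theorem \ref{PWSTheo}, the uniqueness also being guaranteed by the topological isomorphism of Proposition \ref{Propo1}.

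Next I would rewrite the exponential kernel. Writing $z=x+iy$ with $y\in C^\prime\setminus(C^\prime\cap B(0;r))$ held fixed, the factorization of $e^{-i\langle\xi,z\rangle}$ into a real-exponential weight times $e^{-i\langle\xi,x\rangle}$ --- with the sign fixed by the duality $C\leftrightarrow C^*$ and by Proposition \ref{Propo1} --- identifies $f(x+iy)$ with the ordinary inverse Fourier transform ${\mathscr F}^{-1}[e^{-\langle\xi,y\rangle}V]$. That $e^{-\langle\xi,y\rangle}V$ is a bona fide object is precisely Lemma \ref{lemma0}: since $\supp(V)\subseteq C^*$ and $\langle\xi,y\rangle\geq0$ there, the weight $e^{-\langle\xi,y\rangle}$ dominates the growth $e^{h_K(\xi)}$, so that $e^{-\langle\xi,y\rangle}e^{h_K(\xi)}g(\xi)\in L^2$ and ${\mathscr F}^{-1}[e^{-\langle\xi,y\rangle}V]$ is well defined. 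This reproduces (\ref{equa12}) as a pointwise identity for each admissible $y$, following verbatim the computation of Carmichael~\cite[Theorem 1, equation (4)]{Carmi2}.

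It remains to promote this to an equality in ${\mathfrak H}^\prime(T(O))$, and here Theorem \ref{theorem1} supplies exactly what is needed: part $(ii)$ gives that $\{f(z)\mid y\in C^\prime\Subset C,\ |y|\leq Q\}$ is strongly bounded in ${\mathfrak H}^\prime(T(O))$, and part $(iii)$ gives $f(z)\to{\mathscr F}^{-1}[V]$ in the strong topology as $y\to0$. Pairing both sides of (\ref{equa12}) against an arbitrary $\psi\in{\mathfrak H}(T(O))$ then converts the pointwise relation into an identity of continuous linear functionals, which is the assertion of the corollary.

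The hard part will be this last step: one must keep the compact-in-cone relation $C^\prime\Subset C$ and the open/closed ball bookkeeping (the passage from $B[0;r]$ in $\boldsymbol{{\mathscr H}^{o}_c}$ to $B(0;r)$ in $\boldsymbol{{\mathscr H}^{*\,o}_c}$) consistent as $y\to0$, so that the strong boundedness of Theorem \ref{theorem1}$(ii)$ applies uniformly and the boundary value is genuinely taken inside ${\mathfrak H}^\prime(T(O))$ and not merely for fixed $y$.
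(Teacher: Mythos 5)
Your proposal follows essentially the same route as the paper, whose proof is precisely the combination of Carmichael's computation \cite[Theorem 1, eq.\ (4)]{Carmi2} with Theorems \ref{theorem1} and \ref{PWSTheo}; your invocation of Lemma \ref{lemmaMari}, Lemma \ref{lemma0}, and Theorem \ref{theorem1}$(ii)$--$(iii)$ is exactly the content of those cited proofs. The only delicate point is the sign of the weight: the kernel factorizes as $e^{-i\langle\xi,z\rangle}=e^{\langle\xi,y\rangle}e^{-i\langle\xi,x\rangle}$, and the paper justifies the minus sign in (\ref{equa12}) via Remark \ref{remark2} (restricting $\xi$ to the half-space where $\langle\xi,y\rangle<0$), whereas you appeal to the plain dual-cone inequality $\langle\xi,y\rangle\geq 0$ on $C^*$ --- a discrepancy inherited from the paper's own conventions rather than a gap in your argument.
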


\begin{remark}
It is important to remark that in Theorems \ref{theorem1} and \ref{PWSTheo}
we are considering the inverse Fourier-Laplace transform $f(z)=(2\pi)^{-n}
\bigl\langle V,e^{-i\langle \xi,z \rangle} \bigr\rangle$, in opposition to the
Fourier-Laplace transform used in the proof of Theorem 1 of Ref.~\cite{Carmi2}.
In this case the proof of Corollary \ref{corol2} is achieved if we consider
$\xi$ as belon\-ging to the open half-space $\bigl\{\xi \in C^* \mid \langle \xi,y
\rangle < 0\bigr\}$, for $y \in C^\prime \setminus \bigl(C^\prime \cap B(0;r)\bigr)$,
since by hypothesis $f(z) \in \boldsymbol{{\mathscr H}^{*\,o}_c}$. Then,
from~\cite[Lemma 2, p.223]{Vlad} there is $\delta(C^\prime)$ such that for
$y \in C^\prime \setminus \bigl(C^\prime \cap B(0;r)\bigr)$ implies $\langle
\xi,y \rangle \leq -\delta(C^\prime) |\xi||y|$. This justifies the negative sign
in (\ref{equa12}).
\label{remark2}
\end{remark}

\section{Analytic Functionals in ${\mathfrak H}^\prime(T(O))$ Carried by the
Real Space}
\label{Sec4}
Let $\Omega$ be a closed set in $T(O)$. Let $\Omega_m$ be a closed neighborhood of $\Omega$
defined by
\[
\Omega_m=\bigl\{z \in \oC^n \mid {\rm dist}(z,\Omega) \leq 1/m \bigr\}\,\,.
\]
For a closed set $\Omega_m$ of $\oC^n$, ${\mathfrak H}_b(\Omega_m)$ is the space
of all continuous functions $\psi$ on $\Omega_m$ which are holomorphic in the interior
of $\Omega_m$ and satisfy
\[
\|\psi\|_{\Omega_m,N}=\sup_{{\substack{z \in \Omega_m \\ N \in \oN}}}
\bigl\{(1+|z|)^{N}|\psi(z)|\bigr\}\,\,.
\]
${\mathfrak H}_b(\Omega_m)$ is a Fr\'echet space with the seminorms $\|\psi\|_{\Omega_m,N}$.
If $m^\prime < m$, $\Omega_m \subset \Omega_{m^\prime}$, then we have the canonical injections
\begin{equation}
{\mathfrak H}_b(\Omega_{m^\prime}) \hookrightarrow {\mathfrak H}_b(\Omega_m)\,\,.
\label{canoinj21}
\end{equation}
We define the space ${\mathfrak H}(\Omega)$
\begin{equation}
{\mathfrak H}(\Omega)=\underset{m \rightarrow \infty}
{\lim {\rm proj}}\,\,{\mathfrak H}_b(\Omega_m)\,\,,
\label{limproj31}
\end{equation}
where the projective limit is taken following the restriction mappings
(\ref{canoinj21}). 

\begin{definition}
An analytic functional $U \in {\mathfrak H}^\prime(T(O))$ is carried by the
closed set $\Omega \subset \oC^n$ with respect to the decreasing sequence
$\{\Omega_m\}_{m=1}^\infty$ of neighborhoods of $\Omega$, if for every $m$ the
functional $U$ is already a functional on the space ${\mathfrak H}_b(\Omega_m)$ of
restrictions to $\Omega_m$ of functions in ${\mathfrak H}(T(O))$.
\end{definition}

In this section, in particular, we restrict ourselves to the case where
$\Omega$ is contained in $\oR^n=\bigl\{z \in \oC^n \mid z=x+iy, x\in \oR^n,
y=0\bigr\}$. In this case, every function $f(z) \in
\boldsymbol{{\mathscr H}^{*\,o}_c}$, which for each $y \in C^\prime$ as a
function of $x={\rm Re}\,z$ belongs to ${\mathfrak H}^\prime(T(O))$, is a
continuous linear functional on the space of {\bf restrictions} to $\oR^n$ of
functions in ${\mathfrak H}(T(O))$. Then, according to Theorem \ref{theorem1}({\it iii}),
$U=BV(f(z))$ the distributional boundary value of $f(z)$ is an element of
${\mathfrak H}^\prime(T(O))$ carried by $\oR^n$. 

Let $C$ be an open cone of the form $C=\bigcup_{j=1}^m C_j$, $m < \infty$,
where each $C_j$ is an proper open convex cone. If we write $C^\prime \Subset C$,
we mean $C^\prime=\bigcup_{j=1}^m C_j^\prime$ with $C_j^\prime \Subset C_j$.
Furthermore, we define by $C_j^*=\bigl\{\xi \in \oR^n \mid \langle \xi,x \rangle
\geq 0, \forall x \in C_j \bigr\}$ the dual cones of $C_j$, such that the dual
cones $C_j^*$, $j=1,\ldots,m$, have the properties
\begin{equation}
\oR^n \setminus \bigcup_{j=1}^m C_j^*\,\,,
\label{p1}
\end{equation}
and
\begin{equation}
C_j^* \bigcap C_k^*\,\,,j \not= k\,\,,j,k=1,\ldots,m\,\,,
\label{p2}
\end{equation}
are sets of Lesbegue measure zero. Assume that $V \in H^\prime_{C^*}(\oR^n;O)$
can be written as $V=\sum_{j=1}^{m} V_j$, where we define
\begin{equation}
V_j=D^\gamma_\xi[e^{h_K(\xi)}\lambda_j(\xi)g(\xi)]\,\,,
\label{Def2}
\end{equation}
with $\lambda_j(\xi)$ denoting the characteristic function of $C_j^*$, $j=1,\ldots,m$,
$g(\xi)$ being a bounded continuous function on $\oR^n$ and $h_K(\xi)=k|\xi|$ for a
convex compact set $K=\bigl[-k,k\bigr]^n$. In the following theorem not only
${\mathscr F}^{-1}[V]$ but also $V$ is represented as sum of boundary values of
holomorphic functions.

\begin{theorem}
For $V \in H^\prime_{C^*}(\oR^n;O)$ represented as $V=\sum_{j=1}^{m} V_j$
where
\[
V_j=D^\gamma_\xi[e^{h_K(\xi)}\lambda_j(\xi)g(\xi)]\,\,,
\]
with $\lambda_j(\xi)$ denoting the characteristic function of $C_j^*$, $j=1,\ldots,m$,
$g(\xi)$ being a bounded continuous function on $\oR^n$ and $h_K(\xi)=k|\xi|$ for a
convex compact set $K=\bigl[-k,k\bigr]^n$, the following statements are equivalent:
\newcounter{numero}
\setcounter{numero}{0}
\def\Prop{\addtocounter{numero}{1}\item[{$\boldsymbol{\sf St.{\thenumero}-}$}]}
\begin{enumerate}

\Prop ${\mathscr F}^{-1}[V] \in {\mathfrak H}^\prime(T(O))$ is carried by $\oR^n$.

\Prop Let $C$ be an open cone such that $C=\bigcup_{j=1}^m C_j$, where the $C_j$
are open convex cones such that (\ref{p1}) and (\ref{p2}) are satisfied.
$U={\mathscr F}^{-1}[V]$ is the sum of distributional boundary values in
${\mathfrak H}^\prime(T(O))$ of functions $f_j(z) \in \boldsymbol{{\mathscr H}^{*\,o}_{c_j}}$,
$j=1,\ldots,m$.

\Prop $V$ is the sum of distributional boundary values $V_j \in H^\prime_{C_j^*}(\oR^n;O)$
of functions $v_j(\zeta)$ holomorphic in $\oR^n+iC_j^*$, $j=1,\ldots,m$, satisfying for any
$C_j^{*\prime} \Subset C_j^*$, the estimate
\begin{equation}
\bigl|v_j(\zeta)\bigr| \leq {\boldsymbol{\sf K}_\varepsilon}(C_j^{*\prime})
(1+|\eta|)^{N} e^{k|\xi|}\,\,,\quad \eta \in C_j^{*\prime}\,\,.
\label{Estimate2} 
\end{equation}
\label{theorem2} 
\end{enumerate} 
\end{theorem}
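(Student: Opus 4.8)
The plan is to establish the two equivalences $\textbf{St.1} \Leftrightarrow \textbf{St.2}$ and $\textbf{St.2} \Leftrightarrow \textbf{St.3}$; taken together these give the asserted equivalence of all three statements. Throughout I would keep fixed the given decomposition $V = \sum_{j=1}^m V_j$ with $V_j = D^\gamma_\xi[e^{h_K(\xi)}\lambda_j(\xi)g(\xi)]$, and attach to each summand the inverse Fourier--Laplace transform $f_j(z) = (2\pi)^{-n}\langle V_j, e^{-i\langle\xi,z\rangle}\rangle$.

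First I would dispose of $\textbf{St.2} \Rightarrow \textbf{St.1}$, the soft direction. Each $f_j$ lies in $\boldsymbol{{\mathscr H}^{*\,o}_{c_j}}$, and by the discussion preceding the theorem together with Theorem~\ref{theorem1}$(iii)$ its distributional boundary value is an element of ${\mathfrak H}^\prime(T(O))$ carried by $\oR^n$. Since being carried by a closed set is stable under finite sums --- the carrier of $\sum_j U_j$ is contained in the union of the carriers --- the element $U = \sum_{j=1}^m BV(f_j)$ is again carried by $\oR^n$, which is $\textbf{St.1}$.

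The substance is in $\textbf{St.1} \Rightarrow \textbf{St.2}$. Here I would use that $\supp V_j \subseteq C_j^*$, so $V_j \in H^\prime_{C_j^*}(\oR^n;O)$, and that $f_j(z)$ is holomorphic in the interior of $T(C_j^\prime)$: for $y$ in the appropriate half-space (cf. Remark~\ref{remark2}) the weight $e^{\langle\xi,y\rangle}$ is square-summable against the exponentially growing kernel $e^{h_K(\xi)}\lambda_j g$ by Lemma~\ref{lemma0}, and differentiation under the pairing produces the Cauchy--Riemann equations. The delicate point is that $\lambda_j g$ is merely bounded and measurable, not continuous, so Theorem~\ref{theorem1} cannot be quoted verbatim; instead I would verify the bound (\ref{Estimate1}) directly from the $L^2$-estimate of Lemma~\ref{lemma0} and the representation of $V_j$ as a finite-order derivative, thereby placing $f_j$ in $\boldsymbol{{\mathscr H}^{*\,o}_{c_j}}$. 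Theorem~\ref{theorem1}$(iii)$ then gives $BV(f_j) = {\mathscr F}^{-1}[V_j]$, and summation yields $\sum_j BV(f_j) = {\mathscr F}^{-1}\bigl[\sum_j V_j\bigr] = {\mathscr F}^{-1}[V] = U$. The hypothesis $\textbf{St.1}$ together with the measure-zero conditions (\ref{p1})--(\ref{p2}) --- which guarantee $\sum_j \lambda_j = 1$ almost everywhere and that the overlaps $C_j^* \cap C_k^*$ contribute nothing --- are exactly what makes this identification of boundary values consistent, so that the pieces reassemble $U$ on a neighborhood of $\oR^n$.

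Finally, for $\textbf{St.2} \Leftrightarrow \textbf{St.3}$ I would pass to the Fourier-transformed picture, where the two families are conjugate. By Corollary~\ref{corol2} and Theorem~\ref{PWSTheo} the functions $f_j(z)$, holomorphic in the tube over $C_j$, correspond to the uniquely determined summands $V_j \in H^\prime_{C_j^*}(\oR^n;O)$, whose analytic continuation into $\oR^n + iC_j^*$ furnishes the holomorphic functions $v_j(\zeta)$ with $BV(v_j) = V_j$; the estimate (\ref{Estimate1}), with supporting function $h_{C_j^*}$, transforms into (\ref{Estimate2}), the factor $e^{h_K(\xi)} = e^{k|\xi|}$ from (\ref{Def2}) reappearing as $e^{k|\xi|}$ and the admissible subcones $C_j^{*\prime} \Subset C_j^*$ controlling the polynomial factor $(1+|\eta|)^N$. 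Reading this conjugacy in either direction yields the equivalence of the $z$-side representation of $U$ with the $\zeta$-side representation of $V$. I expect the main obstacle throughout to be $\textbf{St.1} \Rightarrow \textbf{St.2}$: controlling the inverse Fourier--Laplace transforms $f_j$ across the boundaries $\partial C_j^*$, where the characteristic functions $\lambda_j$ jump, and confirming that the $f_j$ genuinely satisfy the estimate (\ref{Estimate1}) despite the discontinuity of $\lambda_j g$ --- with the measure-zero hypotheses on the cone intersections and their complement serving as the device that neutralizes these boundary effects.
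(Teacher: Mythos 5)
Your handling of $\boldsymbol{\sf St.1}\Rightarrow\boldsymbol{\sf St.2}$ is essentially the paper's argument reorganized: the paper works with a single $h(\xi)=e^{k|\xi|}g(\xi)$ obtained via Plancherel from $f(z)/P(iz)$ and only then splits it as $\sum_j\lambda_j h$ inside the Parseval relation, whereas you apply the Fourier--Laplace machinery of Section \ref{Sec31} to each $V_j$ separately; both routes land on $U=\sum_j BV(f_j)$ with $f_j\in\boldsymbol{{\mathscr H}^{*\,o}_{c_j}}$, and your remark that Theorem \ref{theorem1} cannot be quoted verbatim because $\lambda_j g$ is only bounded measurable is a fair point the paper itself glosses over. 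Your $\boldsymbol{\sf St.2}\Rightarrow\boldsymbol{\sf St.1}$ via stability of carriers under finite sums is also acceptable (the paper instead closes the cycle with $\boldsymbol{\sf St.3}\Rightarrow\boldsymbol{\sf St.1}$, declared obvious).

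The genuine gap is in $\boldsymbol{\sf St.2}\Rightarrow\boldsymbol{\sf St.3}$. You invoke Theorem \ref{PWSTheo} and Corollary \ref{corol2} and assert that the estimate (\ref{Estimate1}) ``transforms into'' (\ref{Estimate2}), but those results describe the correspondence $V_j\leftrightarrow f_j$ between a distribution supported in $C_j^*$ and a function holomorphic in the tube over $C_j$ --- that is, they are the content of $\boldsymbol{\sf St.2}$, not of $\boldsymbol{\sf St.3}$. Statement $\boldsymbol{\sf St.3}$ requires producing functions $v_j(\zeta)$ holomorphic in the tube over the \emph{dual} cone $C_j^*$ with boundary value $V_j$, which is the opposite Paley--Wiener direction: it rests on the fact that $U_j=BV(f_j)$ is carried by the closed cone $C_j\subset\oR^n$, so that $v_j(\zeta)=\langle U_j,e^{i\langle\zeta,z\rangle}\rangle$ makes sense and is holomorphic for $\eta\in C_j^{*}$. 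To define this pairing and to prove (\ref{Estimate2}) the paper must extend $U_j$ to the space ${\mathfrak H}((C_j)_\varepsilon)$ of functions on a complex $\varepsilon$-neighborhood of $C_j$ (using the density of ${\mathfrak H}$ in ${\mathfrak H}((C_j)_\varepsilon)$ from Br\"uning--Nagamachi), represent $U_j$ by a measure on $(C_j)_\varepsilon$ via the Riesz representation theorem, and then bound $(1+|x|)^{N}e^{\langle\eta,x\rangle}e^{\langle\xi,y\rangle}$ over $(C_j)_\varepsilon$ using $\langle\eta,x\rangle\leq-\delta(C_j^{*\prime})|\eta||x|$ on the relevant half-space. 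None of this machinery appears in your proposal, and no appeal to ``Fourier conjugacy'' substitutes for it: without it you obtain neither the holomorphy of $v_j$ in $\oR^n+iC_j^*$ nor the estimate (\ref{Estimate2}).
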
 

\begin{proof}
Proof that $\boldsymbol{\sf St.1} \Rightarrow \boldsymbol{\sf St.2}$.
Consider
\begin{equation}
h_y(\xi)=\int_{\oR^n}\frac{f(z)}{P(iz)}\,\,
e^{i\langle \xi,z \rangle}d^nx\,\,,\quad z \in T(C^\prime;r)\,\,, 
\label{eq41p}
\end{equation}
with $h_y(\xi)=e^{k|\xi|}g_y(\xi)$, where $g(\xi)$ is a bounded continuous
function on $\oR^n$, and $P(iz)=(-i)^{|\gamma|} z^\gamma$. By
hypothesis $f(z) \in \boldsymbol{{\mathscr H}^{*\,o}_c}$ and satisfies
(\ref{Estimate1}), with $B[0;r]$ replaced by $B(0;r)$. For this reason, for an
$n$-tuple $\gamma=(\gamma_1,\ldots,\gamma_n)$ of non-negative integers
conveniently chosen, we obtain
\begin{equation}
\Bigl|\frac{f(z)}{P(iz)}\Bigr|\leq
{\boldsymbol{\sf M}}(C^\prime)(1+|z|)^{-n-\varepsilon} e^{h_{c^*}(y)}\,\,,
\label{eq42} 
\end{equation}
where $n$ is the dimension and $\varepsilon$ is any fixed positive real
number. This implies that the function $h_y(\xi)$ exists and is a continuous function
of $\xi$. Further, by using arguments paralleling the analysis in~\cite[p.225]{Vlad}
and the Cauchy-Poincar\'e Theorem~\cite[p.198]{Vlad}, we can show that the
function $h_y(\xi)$ is independent of $y={\rm Im}\,z$. Therefore, we denote
the function $h_y(\xi)$ by $h(\xi)$.

From (\ref{eq42}) we have that $f(z)/P(iz) \in L^2$ as a function of
$x={\rm Re}\,z \in \oR^n$, $y \in C^\prime \setminus \bigl(C^\prime
\cap B(0;r)\bigr)$. Hence, from (\ref{eq41p}) and the Plancherel theorem
we have that $e^{-\langle \xi,y \rangle}h(\xi) \in L^2$ as a function of
$\xi \in \oR^n$, and  
\begin{equation}
\frac{f(z)}{P(iz)}={\mathscr F}^{-1}\bigl[e^{-\langle \xi,y \rangle}
h(\xi)\bigr](x)\,\,,\quad z \in T(C^\prime;r)\,\,,
\label{eq43} 
\end{equation}
where the inverse Fourier transform is in the $L^2$ sense. It should be noted
that for Eq.(\ref{eq43}) to be true $\xi$ must belong to the open half-space
$\bigl\{\xi \in C^* \mid \langle \xi,y \rangle < 0\bigr\}$, for
$y \in C^\prime \setminus \bigl(C^\prime \cap B(0;r)\bigr)$, since by hypothesis
$f(z) \in \boldsymbol{{\mathscr H}^{*\,o}_c}$ (see Remark \ref{remark2}).

From (\ref{PRel2}), we have 
\begin{align}
\langle {\mathscr F}^{-1}[V],\psi \rangle&=\langle V,
{\mathscr F}^{-1}[\psi] \rangle \nonumber\\[3mm]
&=\sum_{j=1}^{m} \Bigl \langle D^\gamma_\xi \bigl(\lambda_j(\xi)h(\xi)\bigr),
\frac{1}{(2\pi)^n} \int_{\oR^n} \psi(x)e^{-i \langle \xi,x \rangle} d^nx
\Bigr \rangle \nonumber\\[3mm]
&=\sum_{j=1}^{m} \Bigl \langle \lambda_j(\xi)h(\xi),
\frac{1}{(2\pi)^n} \int_{\oR^n} D^\gamma_\xi \Bigl(\psi(x)
e^{-i \langle \xi,x \rangle} \Bigr) d^nx \Bigr \rangle \label{eq49}\\[3mm]
&=\sum_{j=1}^{m} \Bigl \langle \lambda_j(\xi)h(\xi),
\frac{(-i)^{|\gamma|}}{(2\pi)^n} \int_{\oR^n} x^\gamma \psi(x) e^{-i \langle \xi,x \rangle}
d^nx \Bigr \rangle \nonumber\\[3mm]
&=\sum_{j=1}^{m} \lim_{C^\prime \ni y \rightarrow 0}\Bigl \langle
{\mathscr F}^{-1}\bigl[e^{-\langle \xi,y \rangle}
\lambda_j(\xi)h(\xi)\bigr],(-i)^{|\gamma|}(x+iy)^\gamma \psi(x) \Bigr \rangle \nonumber\,\,,
\end{align}
where we have used the fact that the differentiation under the integral sign is valid.
We note that $\psi(x) \in {\mathfrak H}(T(O))$ implies $(z^\gamma \psi(x)) \in
{\mathfrak H}(T(O))$ as a function of $x={\rm Re}\,z \in \oR^n$.

From (\ref{eq43}), we have for $z \in T(C^\prime;r)$
\begin{align}
\Bigl \langle i^{|\gamma|} (x+iy)^{-\gamma} f(x+iy), \psi(x) \Bigl \rangle&=
\Bigl \langle {\mathscr F}^{-1}\bigl[e^{-\langle \xi,y \rangle}
h(\xi)\bigr],\psi(x) \Bigl \rangle\,\,,\nonumber\\[3mm]
&=\sum_{j=1}^{m} \Bigl \langle {\mathscr F}^{-1}\bigl[e^{-\langle \xi,y \rangle}
\lambda_j(\xi)h(\xi)\bigr], \psi(x) \Bigr \rangle\,\,.
\label{eq410}
\end{align}
Combining (\ref{eq49}) and (\ref{eq410}), we obtain
\begin{align*}
\langle {\mathscr F}^{-1}[V],\psi \rangle=\sum_{j=1}^{m}
\lim_{C^\prime \ni y \rightarrow 0} \langle f_j(x+iy),\psi(x) \rangle
=\sum_{j=1}^{m} \langle U_j,\psi \rangle=\langle U,\psi \rangle\,\,.
\end{align*}
Thus the inverse Fourier transform ${\mathscr F}^{-1}[V]$ is a distributional boundary
value of $\sum_{j=1}^{m} f_j(z)$ in the sense of weak convergence. But
from~\cite[Corollary 1, p.358]{Treves} the latter implies strong convergence since
${\mathfrak H}(T(O))$ is Montel.

Proof that $\boldsymbol{\sf St.2} \Rightarrow \boldsymbol{\sf St.3}$. In
the following we shall write $\zeta_j=\xi+i\eta_j$, with $\eta_j \in C_j^{\prime *}
\Subset C_j^*$. It follows that for $\varphi \in H(\oR^n;O)$
\begin{align*}
\langle V, \varphi \rangle 
&=\sum_{j=1}^{m} \Bigl \langle U_j, \int_{\oR^n} \varphi(\xi)
e^{i \langle \xi,z \rangle} d^n\xi \Bigr \rangle \\[3mm]
&=\sum_{j=1}^{m} \lim_{C^{\prime *} \ni \eta_j \rightarrow 0} \Bigl \langle U_j,
\int_{\oR^n} \varphi(\xi)e^{i \langle \zeta,z \rangle} d^n\xi \Bigr \rangle \\[3mm]
&=\sum_{j=1}^{m} \lim_{C^{\prime *} \ni \eta_j \rightarrow 0} \int_{\oR^n}
\varphi(\xi) \langle U_j, e^{i \langle \zeta,z \rangle} \rangle\,d^n\xi \\[3mm]
&=\sum_{j=1}^{m} \lim_{C^{\prime *} \ni \eta_j \rightarrow 0} \int_{\oR^n}
\varphi(\xi) v_j(\zeta)\,d^n\xi\,\,.
\end{align*}
Each function $v_j(\zeta)=\langle U_j, e^{i \langle \zeta,z \rangle} \rangle$
is holomorphic in $\oR^n+iC_j^*$, $j=1,\ldots,m$.

Now, since we consider $V$ as a distribution in $H^\prime(\oR^n;O)$,
$U={\mathscr F}^{-1}[V]$ acts, in principle, on functions in ${\mathfrak H}(T(O))$. 
We need a decomposition of $U$ as sum of analytic functionals carried by closed convex
cones $C_j$ in $\oR^n$. For that purpose we introduce the following space,
${\mathfrak H}((C_j)_\varepsilon)$, of analytic functions. Let $C_j$ be a closed
convex cone in $\oR^n$. As in Ref.~\cite{BruNa1}, for $\varepsilon > 0$, we define
the closed complex $\varepsilon$-neighborhood of $C_j$ by
\[
(C_j)_\varepsilon=\bigl\{z \in \oC^n \mid {\boldsymbol{\exists}}\,x \in C_j,
|{\rm Re\,}z-x|+|{\rm Im\,}z|_\beta \leq \varepsilon \bigr\}\,\,,
\]
where $|y|_\beta$ is a norm of $\oR^n$ satisfying $|y|_\beta
\geq |y|$ for the Euclidean norm $|y|$. Let $L_\alpha$ be the closure of
$(C_j)_{\varepsilon/(1+1/\alpha)}$. ${\mathfrak H}_b(L_\alpha)$ is, by definition,
the space of all continuous functions $\psi$ on $L_\alpha$ which are holomorphic
in the interior of $L_\alpha$ and satisfy
\[
\|\psi\|_{L_\alpha,N}=\sup_{{\substack{z \in L_\alpha \\ N \in \oN}}}
\bigl\{(1+|z|)^N|\psi(z)|\bigr\}\,\,.
\]
${\mathfrak H}_b(L_\alpha)$ is a Fr\'echet space with the seminorms
$\|\psi\|_{L_\alpha,N}$. If $\alpha_{_1} < \alpha_{_2}$, $L_{\alpha_{_1}}
\subset L_{\alpha_{_2}}$, then we have the canonical injections
\begin{equation}
{\mathfrak H}_b(L_{\alpha_{_2}}) \hookrightarrow {\mathfrak H}_b(L_{\alpha_{_1}})\,\,.
\label{canoinj2}
\end{equation}
Then, we define the space ${\mathfrak H}((C_j)_\varepsilon)$
\begin{equation}
{\mathfrak H}((C_j)_\varepsilon)=\underset{\alpha \rightarrow \infty}
{\lim {\rm proj}}\,\,{\mathfrak H}_b(L_\alpha)\,\,,
\label{limproj3}
\end{equation}
where the projective limit is taken following the restriction mappings
(\ref{canoinj2}). According to Ref.~\cite[Theorem 2.13]{BruNa1},
${\mathfrak H}$ is dense in ${\mathfrak H}((C_j)_\varepsilon)$. This implies that
$e^{i \langle \zeta,z \rangle}$ as a function of $z$ can be approximated in
${\mathfrak H}((C_j)_\varepsilon)$ by functions in ${\mathfrak H}$. We now invoke
the Riesz's Representation Theorem. If $U_j$ is a continuous linear functional
on ${\mathfrak H}((C_j)_\varepsilon)$, that is,
\[
|\langle U_j, \psi \rangle| \leq {\boldsymbol{\sf M}_\varepsilon}
\sup_{{\substack{z \in (C_j)_\varepsilon \\ N \in \oN}}}
\bigl\{(1+|z|)^{N}|\psi(z)|\bigr\}\,\,,
\]
for a fixed ${\boldsymbol{\sf M}_\varepsilon}$ depending on $\varepsilon$ and all $\psi$,
then there exists a measure $\mu_j$ on $(C_j)_\varepsilon$ such that for all $\psi(z)$ in
${\mathfrak H}((C_j)_\varepsilon)$
\[
\langle U_j, \psi \rangle=\int_{(C_j)_\varepsilon} \psi(z)d\mu_j(z)\,\,.
\]
Moreover, the number $\int |d\mu_j(z)|$ may be taken as the bound of $U_j$, {\it i.e.},
the number ${\boldsymbol{\sf M}_\varepsilon}$ above. Thus, it follows that
\[
|\langle U_j, \psi \rangle| \leq \|\psi\|_{(C_j)_\varepsilon,N}
\int_{(C_j)_\varepsilon} \frac{|d\mu_j(z)|}{(1+|z|)^{N}}\,\,.
\]
The above representation yields for $U_j$ carried by $C_j$ in $\oR^n$
\[
\int_{(C_j)_\varepsilon} \frac{|d\mu_j(z)|}{(1+|z|)^{N}}
\leq
\int_{(C_j)_\varepsilon} \frac{|d\mu_j(z)|}{(1+|x|)^{N}}
\leq
{\boldsymbol{\sf M}_\varepsilon}\,\,.
\]

Furthermore, using the mean value theorem, one can verify that, for each $\varphi
\in H(\oR^n;O)$ and $\eta \in C_j^{\prime *}$, the Riemann sums corresponding to the
integral
\[
\int_{\oR^n} \varphi(\xi) e^{i \langle \zeta,z \rangle} d\xi
\]
converge in the space ${\mathfrak H}(C_j)$, $j=1,\ldots,m$ to $\psi_\eta(z)=
\psi(z)e^{-z\eta}$, where
\[
\psi(z)=\int_{\oR^n} \varphi(\xi) e^{i \langle \xi,z \rangle} d\xi\,\,.
\]
Hence, the identity
\[
\langle U_j,\psi(z)e^{-z\eta} \rangle=\int_{\oR^n} \varphi(\xi) v_j(\zeta)\,d^n\xi\,\,,
\]
holds in $C_j^\prime$. It is straightforward to prove the convergence $\psi_\eta
\rightarrow \psi$ as $ C_j^{\prime *} \Subset C_j^*\ni \eta \rightarrow 0$.
Finally, the estimate (\ref{Estimate2}) is a consequence of the inequality
$|\langle U_j, e^{i \langle \zeta,z \rangle} \rangle| \leq \|U_j\|_{(C_j)_\varepsilon,N}
\|e^{i \langle \zeta,z \rangle}\|_{(C_j)_\varepsilon,N}$. Then, it follows that
\begin{align*}
\bigl|v_j(\zeta)\bigr| \leq {\boldsymbol{\sf M}_\varepsilon}
\sup_{{\substack{z \in (C_j)_\varepsilon \\ N \in \oN}}}
(1+|x|)^{N} |e^{i \langle \zeta,z \rangle}|
={\boldsymbol{\sf M}_\varepsilon}
\sup_{{\substack{z \in (C_j)_\varepsilon \\ N \in \oN}}}
(1+|x|)^{N}e^{\langle \eta,x \rangle}e^{\langle \xi,y \rangle}. 
\end{align*}
Now, assume that $x$ belongs to the open half-space $\bigl\{x \in C_j \mid
\langle \eta,x \rangle < 0\bigr\}$. Then, for some fixed number $1 \geq \delta(C_j^{*\prime})
> 0$, it follows that $\langle \eta,x \rangle \leq -\delta(C_j^{*\prime})|\eta||x|$ for
$\eta \in C_j^{*\prime}$. Thus,
\begin{align*}
\bigl|v_j(\zeta)\bigr| &\leq {\boldsymbol{\sf M}_\varepsilon} e^{k|\xi|}
\sup_{{\substack{t \geq 0 \\ N \in \oN}}}
(1+|t|)^{N} e^{-\delta(C_j^{*\prime})t|y|}\\[3mm]
&\leq {\boldsymbol{\sf K}_\varepsilon}(C_j^{*\prime})
(1+|\eta|)^{-N} e^{k|\xi|}
\,\,,\quad
\eta \in C_j^{*\prime} \Subset C_j^*\,\,. 
\end{align*}

Proof that $\boldsymbol{\sf St.3} \Rightarrow \boldsymbol{\sf St.1}$. It is obvious.
\end{proof}

\begin{remark}
An analogous theorem to the Theorem \ref{theorem2} was obtained
by J.W. de Roever~\cite{Roever} to the space of analytic functionals in
$Z^\prime$.   
\end{remark}

\section{Uniqueness Theorem}
\label{Sec5}
We now state the main theorem of this paper.

\begin{theorem}[Uniqueness Theorem]
Let $V \in H^\prime(\oR^n;O)$ be a distribution of exponential growth whose support
is contained in some proper convex cone $C^*$. Then only the whole of $\oR^n$ can be
a carrier of $U={\mathscr F}^{-1}[V]$.
\label{UniqueTheo}
\end{theorem}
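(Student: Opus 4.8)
The strategy is a proof by contradiction modelled on Soloviev's argument: I assume that $U=\mathscr F^{-1}[V]$ is carried by some closed cone $\Gamma\subsetneq\oR^n$ with $\Gamma\neq\oR^n$, and I aim to conclude $V=0$; this shows that a nonzero $V$ forces the carrier of $U$ to be all of $\oR^n$. The starting observation is that, by the injections of Theorem \ref{theoINJ}, $V\in H^\prime(\oR^n;O)$ is an ordinary distribution, so $U=\mathscr F^{-1}[V]$ possesses a bona fide analytic wavefront set $\WF_A(U)$ which can be analysed by the usual distribution-theoretic calculus, as anticipated in the Introduction.

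The first half of the proof extracts holomorphy from the support hypothesis. Since $\supp(V)\subseteq C^*$ is a proper convex cone, its dual $C$ is a solid open convex cone, and Theorem \ref{theorem1} together with the Paley--Wiener--Schwartz Theorem \ref{PWSTheo} show that
\[
f(z)=(2\pi)^{-n}\bigl\langle V,e^{-i\langle \xi,z \rangle}\bigr\rangle\in\boldsymbol{{\mathscr H}^{*\,o}_c}
\]
is holomorphic in the tubular radial domain $T(C)=\oR^n+iC$, satisfies the bound (\ref{Estimate1}), and has $U=BV(f)$ as its boundary value in ${\mathfrak H}^\prime(T(O))$. Because the frequency content of $U$ is confined to $\supp(V)$, this gives the fibre restriction $\WF_A(U)\subseteq\oR^n\times C^*$.

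The second half extracts an analytic continuation from the carrier hypothesis. Carrying out, for the smaller cone $\Gamma$, the Riesz-representation and decomposition argument used in the implication $\boldsymbol{\sf St.2}\Rightarrow\boldsymbol{\sf St.3}$ of Theorem \ref{theorem2}, one represents $U$ through defining functions $v_j(\zeta)=\langle U,e^{i\langle \zeta,z\rangle}\rangle$ holomorphic over the duals of the pieces of $\Gamma$ and obeying (\ref{Estimate2}). Since $\Gamma\neq\oR^n$, its complement contains a nonempty open cone $W$; the covering property (\ref{p1})--(\ref{p2}), which is what a carrier equal to $\oR^n$ would require, now fails over $W$, so over the directions of $W$ the functional $U$ carries no analytic singularity and $f$ continues holomorphically across the corresponding real piece of the boundary of $T(C)$.

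It remains to glue and conclude. The holomorphy of $f$ on the full tube $T(C)$ over the solid cone $C$ and its continuation across $W$ can be matched by an edge-of-the-wedge argument to produce a function holomorphic on a connected complex domain on which, in the now-unrestricted directions, the polynomial-times-exponential bounds (\ref{Estimate1}) and (\ref{Estimate2}) may be sharpened by a Phragm\'en--Lindel\"of/Liouville estimate; this forces $f$ to reduce to a $z$-pseudo-polynomial. By the definition of ${\mathscr U}=\boldsymbol{{\mathscr H}_\omega}/\boldsymbol{\Pi}$ and Theorem \ref{HasumiTheo}, a pseudo-polynomial is the zero ultrahyperfunction, whence $U=0$ and $V=\mathscr F[U]=0$, the desired contradiction. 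The main obstacle is precisely this last bridge: because tempered ultrahyperfunctions are \emph{non-localizable}, ``carried by $\Gamma$'' cannot be read off as ordinary vanishing outside $\Gamma$, and one must translate it, via the analytic wavefront set and the decomposition of Theorem \ref{theorem2}, into a genuine directional holomorphic continuation of $f$ and then control the growth tightly enough for the edge-of-the-wedge and Liouville steps to apply.
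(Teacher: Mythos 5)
There is a genuine gap, and it sits exactly where you locate ``the main obstacle.'' Your plan splits into two halves --- holomorphy of the Laplace transform $f(z)=(2\pi)^{-n}\langle V,e^{-i\langle\xi,z\rangle}\rangle$ in $T(C')$ from $\supp(V)\subseteq C^*$, and some directional analyticity from the carrier hypothesis --- but the bridge you propose (edge-of-the-wedge plus a Phragm\'en--Lindel\"of/Liouville estimate forcing $f$ to be a pseudo-polynomial) is not carried out and, as sketched, cannot work. The function $f$ is holomorphic in a single tube $\oR^n+iC'$, so there is no opposite wedge with matching boundary values to which the edge-of-the-wedge theorem could be applied; moreover, the ``continuation across $W$'' that you extract from Theorem \ref{theorem2} lives in the dual variable $\zeta=\xi+i\eta$ (the defining functions $v_j(\zeta)$ are holomorphic in $\oR^n+iC_j^{*}$), not in $z$, so it does not enlarge the domain of $f$ at all. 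Without $f$ being entire, Liouville gives nothing, and the conclusion $f\in\boldsymbol{\Pi}$ is unsupported. A second, related problem is your assertion $WF_A(U)\subseteq\oR^n\times C^*$: the analytic wavefront set is a distribution-theoretic notion, and $U\in{\mathfrak H}^\prime$ is a non-localizable analytic functional, so this object is not available; the paper deliberately works with $WF_A(V)$ instead, which makes sense because $H^\prime(\oR^n;O)\hookrightarrow{\mathscr D}^\prime(\oR^n)$.

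The paper's actual argument supplies the missing idea and is worth contrasting with yours. Lemma \ref{lemmaWF} uses the representation of $V$ as a sum of boundary values of the $v_j(\zeta)$ (Theorem \ref{theorem2}, $\boldsymbol{\sf St.3}$) to bound $WF_A(V)\subset\oR^n\times\Gamma$ by the carrier cone $\Gamma$ of $U$. The decisive step in the opposite direction is H\"ormander's Theorem 9.6.6: after reducing to the case $0\in\supp V$ by the series of contractions $\sum_j a_jV(j\xi)$, every vector of $-C\setminus\{0\}$ is an exterior normal to $\supp V$ at the apex, so the linear span of such normals --- which is all of $\oR^n$ because $C^*$ is proper and $C$ is solid --- lies in $WF_A(V)_{\xi=0}$. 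Comparing with the lemma forces $\Gamma=\oR^n$. Your proposal contains the first half of this tension (the support/holomorphy side) but never produces the lower bound on the singular directions of $V$ that the conormal theorem provides; some substitute for that step is indispensable, since it is the only place where the properness of $C^*$ enters quantitatively.
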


For our proof of the Theorem \ref{UniqueTheo} we need a lemma on the analytic wavefront
set of $V$, denoted by $WF_A(V)$. Since the distributions of exponential growth are
embedded in the space of distributions, we can use the general facts from distribution
theory to analyse the analytic wavefront set $WF_A$ of a distribution
$V \in H^\prime(\oR^n;O)$. The reader, who wants to obtain further insight on the
concept of analytic wavefront set of distributions, is referred to the H\"ormander's
textbook~\cite[Chapters 8 and 9]{Hor2}.

\begin{lemma}
If $V \in H^\prime(\oR^n;O)$ and $U={\mathscr F}^{-1}[V]$ is carried by a closed
cone $C$, then
\[
WF_A(V) \subset \oR^n \times C\,\,.
\]
\label{lemmaWF}
\end{lemma}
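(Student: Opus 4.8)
The plan is to relate the analytic wavefront set of $V$ to the analyticity/growth properties of $U = {\mathscr F}^{-1}[V]$ via the characterization of $WF_A$ in terms of FBI-type transforms or, more directly, via the classical result (Hörmander, \cite[Chapters 8 and 9]{Hor2}) that the analytic wavefront set of a distribution is governed by the directions in which its localized Fourier--Laplace transform fails to decay exponentially. Since $U$ is carried by the closed cone $C$, I would exploit the representation afforded by Theorem \ref{theorem2}: the hypothesis that $U = {\mathscr F}^{-1}[V]$ is carried by a set associated with $C$ means that $V$ can be written as a sum of boundary values $V = \sum_{j} V_j$ of functions $v_j(\zeta)$ holomorphic in tubes $\oR^n + iC_j^*$ and satisfying the exponential estimate (\ref{Estimate2}). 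The holomorphy of these boundary-value representatives in tube domains over the dual cones $C_j^*$ is precisely the geometric input that controls the directions of analytic singularities.

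\emph{First} I would fix a point $\xi_0 \in \oR^n$ and a covector direction; the goal is to show that if $(\xi_0, \theta) \in WF_A(V)$, then $\theta \in C$. The mechanism is that a distribution which is locally a boundary value $BV(v)$ of a function holomorphic in a tube $\oR^n + i\Gamma$ has its analytic wavefront set contained in $\oR^n \times \Gamma^*$, where $\Gamma^*$ is the dual cone — this is the standard ``edge of the wedge'' / boundary-value relation between analyticity in a tube and the direction of the analytic wavefront set. Applying this to each summand $v_j$ holomorphic in $\oR^n + iC_j^*$, and using the estimate (\ref{Estimate2}) to guarantee that the boundary value is taken in the appropriate topology, I obtain $WF_A(V_j) \subset \oR^n \times (C_j^*)^*$. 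Since $(C_j^*)^* = \overline{C_j}$ for a convex cone and the $C_j$ exhaust $C$, summing over $j$ and using subadditivity of the wavefront set under sums yields $WF_A(V) \subset \bigcup_j \oR^n \times \overline{C_j} = \oR^n \times C$.

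\emph{The key steps}, in order, are: (1) invoke Theorem \ref{theorem2} to pass from ``$U$ carried by $C$'' to the boundary-value decomposition of $V$ with holomorphic representatives $v_j$ over the tubes $\oR^n + iC_j^*$; (2) establish, for a single boundary value of a function holomorphic in a proper tube $\oR^n + i\Gamma$ and satisfying polynomial-times-exponential bounds, the inclusion $WF_A \subset \oR^n \times \Gamma^*$, reading off the exponential decay rate in the dual directions from estimate (\ref{Estimate2}); (3) identify $(C_j^*)^* = \overline{C_j}$ by cone duality; and (4) combine via $WF_A(V) \subset \bigcup_j WF_A(V_j)$.

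\emph{The main obstacle} is step (2): converting the holomorphic boundary-value representation into a genuine analytic-wavefront-set statement. In the ordinary tempered-distribution setting this is textbook, but here the $v_j$ grow exponentially like $e^{k|\xi|}$ (reflecting that $V$ lives in $H^\prime(\oR^n;O)$, distributions of exponential growth, rather than $\mathscr{S}^\prime$), so I must verify that the exponential factor does not spoil the analytic-microlocal estimate. The resolution is that $WF_A$ is a purely local and microlocal notion, insensitive to the global exponential growth: because $H^\prime(\oR^n;O) \hookrightarrow \mathscr{D}^\prime(\oR^n)$ (the injection following Theorem \ref{theoINJ}), the distribution $V$ is an ordinary distribution to which Hörmander's theory applies directly, and the directional content of $WF_A$ is dictated solely by the cones $C_j^*$ of holomorphy and not by the magnitude $e^{k|\xi|}$ of the bound, which affects only the order of the distribution, not the directions of its analytic singular support.
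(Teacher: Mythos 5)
Your proposal is correct and follows essentially the same route as the paper: decompose $U$ into functionals carried by closed convex subcones $C_j$ covering $C$, invoke Theorem \ref{theorem2} to represent $V$ as a sum of boundary values of functions $v_j$ holomorphic in the tubes $\oR^n+iC_j^*$ satisfying the estimate (\ref{Estimate2}), apply the standard tube-domain-to-analytic-wavefront-set inclusion together with the duality $(C_j^*)^*=\overline{C_j}$, and take the union over $j$ (your observation that the embedding $H^\prime(\oR^n;O)\hookrightarrow{\mathscr D}^\prime(\oR^n)$ lets H\"ormander's machinery apply despite the exponential growth is exactly the justification the paper relies on). The one place the paper is slightly more careful is the last step: since the covering cones may overshoot $C$, it concludes only $WF_A(V)\subset\oR^n\times\bigcup_j C_j$ and then refines the covering and shrinks it to $C$, rather than asserting $\bigcup_j\overline{C_j}=C$ outright.
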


\begin{proof}
Let $\{C_j\}_{j \in L}$ be a finite covering of closed properly convex
cones of $C$. Decompose $U$ as follows:
\begin{equation}
U=\sum\,U_j\,\,,
\label{decomp}  
\end{equation}
The decomposition (\ref{decomp}) will induce a representation of
$V$ in the form of a sum of boundary values of functions $v_j(\zeta)$,
such that $v_j(\zeta) \rightarrow V_j$ as $\eta \rightarrow 0$, $\eta
\in C_j^{*\prime} \subset C_j^*$. According to Theorem \ref{theorem2}, the family
of functions $v_j(\zeta)$ satisfy the estimate
\begin{align*}
\bigl|v_j(\zeta)\bigr| \leq {\boldsymbol{\sf K}_\varepsilon}(C_j^{*\prime})
(1+|\eta|)^{N} e^{k|\xi|}
\,\,,\quad
\eta \in C_j^{*\prime} \Subset C_j^*\,\,, 
\end{align*}
unless $\langle \eta,x \rangle \geq 0$ for $\eta \in C_j^*$ and $x \in C_j^\prime$.
This implies that the cones of ``bad'' directions responsible for the singularities
of these boundary values are contained in the dual cones of the base cones. So, we
have the inclusion
\begin{equation}
WF_A(u) \subset \oR^n \times \bigcup_j\,C_j\,\,.
\label{inclusion}  
\end{equation}
Then, by making a refinement of the covering and shrinking it to $C$,
we obtain the desired result.
\end{proof}

We now turn to the proof of Theorem \ref{UniqueTheo}. It is essentially the restriction
of proof of Soloviev's Uniqueness Theorem~\cite{Solo1,Solo2} to the space of distributions
of exponential growth, since the latter is embedded in the space of distributions.
For this reason, we limit ourselves to explain, exactly as in~\cite{Solo1,Solo2}, the role
that the Lemma \ref{lemmaWF} plays in the derivation of Theorem \ref{UniqueTheo}. We begin with
the simplest case when $0 \in \supp\,V$.  Then every vector in the cone $-C \setminus \{0\}$
is an external normal to the support at the point 0.  By Theorem 9.6.6 of~\cite{Hor2}, all
the nonzero elements of the linear span of external normals belong to $WF_A(V)_{\xi=0}$.
Because the cone $C^*$ is properly convex, the interior of $C$ is not
empty, and this linear span covers $\oR^n$. Therefore, by Lemma \ref{lemmaWF}, each carrier
cone of $U={\mathscr F}^{-1}[V]$ must then coincide with $\oR^n$. The general case can be
reduced to this special case by considering the series $\sum^\infty_{j=1}a_j V_j$,
(of suitable contractions), where $V_j(\xi)=V(j \xi)$.
As shown in~\cite{Solo1,Solo2}, the coefficients $a_j$ can be chosen such
that this series converges in ${\mathscr D}^{\prime}(\oR^n)$ to a distribution whose
support contains the point 0 and whose Fourier transform is carried by the
same cones that $U={\mathscr F}^{-1}[V]$ is. Then the proof of Theorem \ref{UniqueTheo}
follows from the fact that we can consider the distributions $V \in {\mathscr D}^{\prime}(\oR^n)$
which belong to $H^\prime(\oR^n;O)$ in accordance to Lemma \ref{lemmaMari}.

\section{Connection with Field-Theoretical Models with a Fundamental Length}
\label{Sec6}
This section represents a border between mathematics and physics. The results given
here have an independent, purely mathematical, interest. It is to be hoped that
the results of this section meet with the interest of theoretical physicists and
mathematicians who are working with quantum field theory, or string theory. Recent
de\-ve\-lo\-pments~\cite{BruNa1} have shown the need for analytic functionals which
are Fourier transform of distributions of exponential growth in order to treat quantum
field theories which require a fundamental length, as indicate from string theory.
 
According to Wightman~\cite{SW}-\cite{Haag}, the conventional postulates of QFT can
be fully reexpressed in terms of an equivalent set of properties of the vacuum expectation
values of their ordinary field products, called Wightman distributions
\begin{eqnarray}
{\mathfrak W}_m(f_{1} \otimes \cdots \otimes f_{m})\overset{\text{\rm def}}
{=}\langle\Omega_o \mid \Phi(f_1) \cdots \Phi(f_m) \mid \Omega_o\rangle\,\,,
\label{vev}
\end{eqnarray}
where $(f_{1} \otimes \cdots \otimes f_{m})= f_1(x_1) \cdots f_m(x_m)$
is considered as an element of ${\mathscr S}(\oR^{4m})$, and
$\mid \Omega_o\rangle$ is the vacuum vector, unique vector time-translation
invariant of the Hilbert space of states.

\begin{remark}
To keep things as simple as possible, we will assume that the Wightman
distributions are ``functions'' ${\mathfrak W}_m(x_{1},\ldots,x_{m})$.
The reader can easily supply the necessary test functions.
\end{remark}

As a general rule, the continuous
linear functionals ${\mathfrak W_m}(x_{1},\ldots,x_{m})$ are assumed to satisfy
the following axioms:

\setcounter{numero}{0}
\def\Prop{\addtocounter{numero}{1}\item[{$\boldsymbol{\sf Ax.{\thenumero}}$}]}
\begin{enumerate}

\Prop (Temperedness). The Wightman functions
${\mathfrak W_m}(x_{1},\ldots,x_{m})$ are tempered distributions in
${\mathscr S}^\prime(\oR^{4m})$, for all $m \geq 1$. This property is included
in the list of properties for a QFT for technical reasons.

\bigskip

\Prop (Poincar\'e Invariance). Wightman functions are invariant under the
Poincar\'e group
\[
{\mathfrak W_m}(\Lambda x_{1}+a,\ldots,\Lambda x_{m}+a)=
{\mathfrak W_m}(x_{1},\ldots,x_{m})\,\,.
\]

\bigskip

\Prop (Spectral Condition). The Fourier transforms of the Wightman functions
have support in the region
\begin{gather*}
\Bigm\{(p_1,\ldots,p_m)\in {\Bbb R}^{4m}\,\,\bigm|\,\,\sum_{j=1}^{m}p_j=0,\,\,
\sum_{j=1}^{k}p_j \in {\overline V}_+,\,\,k=1,\dots,m-1 \Bigm\}\,\,,
\end{gather*}
where ${\overline V}_+=\{(p^0,{\boldsymbol p}) \in \oR^4 \mid p^2 \geq 0,
p^0 \geq 0\}$ is the closed forward light cone.

\bigskip

\Prop (Local commutativity). This property has origin in the quantum
principle that operator observables $\Phi(x)$ corresponding to independent
measurements must comute. 
\[
{\mathfrak W}_m(x_{1},\ldots,x_j,x_{j+1},\ldots,x_{m})=
{\mathfrak W}_m(x_{1},\ldots,x_{j+1},x_j,\ldots,x_{m})\,\,,
\]
if $(x_j-x_{j+1})^2<0$.

\bigskip
 
\Prop For any finite set $f_o,f_1,\ldots,f_N$ of test functions such that
$f_o \in \oC$, $f_j \in {\mathscr S}(\oR^{4j})$ for $1 \leq j \leq N$,
one has
\begin{align*}
\sum_{k,\ell=0}^N {\mathfrak W}_{k+\ell}(f_k^* \otimes f_\ell)
\geq 0\,\,.
\end{align*}

\bigskip

\Prop (Hermiticity). A neutral scalar field must be real valued. This implies that
\[
{\mathfrak W}_m(x_{1},x_2,\ldots,x_{m-1},x_{m})=
\overline{{\mathfrak W}_m(x_{m},x_{m-1},\ldots,x_{1},x_2})\,\,.
\]

\end{enumerate} 

In string theory, it is said that there is a fundamental length $\ell > 0$ such that one
cannot distinguish events which occur in a smaller distance than $\ell$~\cite{Pol}. Therefore,
string theory is non-localizable. Hence, generalizing the properties $\boldsymbol{\sf Ax.1}$
to $\boldsymbol{\sf Ax.6}$ in string theory is not as simple. Here, the question is:
how can the Property $\boldsymbol{\sf Ax.4}$ be described in field theory with a
fundamental length? For this question, one answer has been suggested by
Br\"uning-Nagamachi~\cite{BruNa1}. They have conjectured that tempered ultrahyperfunctions
are well adapted for their use in quantum field theory with a fundamental length. Although
tempered ultrahyperfunctions have no standard localization properties, a model for
relativistic quantum field theory with a fundamental length can be constructed which offers
many familiar features. The analysis of Br\"uning-Nagamachi~\cite{BruNa1} has shown that the
vacuum expectation values of a QFT with a fundamental length in terms of tempered
ultrahyperfunctions satisfies a number of specific properties. We shall not give all axioms
defining a quantized field with a fundamental length but only those which are needed in this
section (we refer to Br\"uning-Nagamachi~\cite{BruNa1} for details).

\setcounter{numero}{0}
\def\Prop{\addtocounter{numero}{1}\item[{$\boldsymbol{\sf Ax.^\prime{\thenumero}}$}]}
\begin{enumerate}

\Prop ${\mathfrak W_0}=1$, ${\mathfrak W_m} \in {\mathscr U}_c(\oR^{4m})$
for $n \geq 1$, and ${\mathfrak W_m}(f^*)=\overline{{\mathfrak W_m}(f)}$,
for all $f \in {\mathfrak H}(T(\oR^{4m}))$, where $f^*(z_1,\ldots,z_m)=
\overline{f(\bar{z}_1,\ldots,\bar{z}_m)}$.

\bigskip

\Prop The Wightman functionals ${\mathfrak W_m}$ are invariant under
the Poincar\'e group
\[
{\mathfrak W_m}(\Lambda x_{1}+a,\ldots,\Lambda x_{m}+a)=
{\mathfrak W_m}(x_{1},\ldots,x_{m})\,\,.
\]

\bigskip

\Prop Spectral condition. Since the Fourier transformation of tempered
ultrahyperfunctions are distributions, the spectral condition is not
so much different from that of Schwartz distributions. Thus, for every
$m \in \oN$, there is $\widehat{{\mathfrak W}}_{m} \in
H_{V^*}^\prime(\oR^{4m},\oR^{4m})$~\cite{BruNa1}, where
\begin{equation}
H^\prime_{V^*}(\oR^{4m},\oR^{4m})=\Bigl\{V \in H^\prime(\oR^{4m},\oR^{4m}) \mid
\supp\,(\widehat{{\mathfrak W}}_{m}) \subset V^* \Bigr\}\,\,, 
\label{EQ31'} 
\end{equation}
with $V^*$ being the properly convex cone defined by
\begin{gather*}
\Bigm\{(p_1,\ldots,p_m)\in {\Bbb R}^{4m}\,\,\bigm|\,\,\sum_{j=1}^{m}p_j=0,\,\,
\sum_{j=1}^{k}p_j \in {\overline V}_+,\,\,k=1,\dots,m-1 \Bigm\}\,\,,
\end{gather*}
where ${\overline V}_+=\{(p^0,{\boldsymbol p}) \in \oR^4 \mid p^2 \geq 0,
p^0 \geq 0\}$ is the closed forward light cone.

\bigskip

\Prop Extended local commutativity condition. Let $f,g$ be two test functions
in ${\mathfrak H}(T(\oR^4))$, then the fields $\Phi(f)$ and $\Phi(g)$ are said
to commute for any relative spatial separation $\ell^\prime > \ell$ of
their arguments, if the functional
\begin{align}
{\boldsymbol{\sf F}}=\bigl\langle\Theta \mid \bigl[\varphi(f),\varphi(g)\bigr]
\mid \Psi\bigr\rangle
=\bigl\langle\Theta \mid \bigl(\varphi(f) \varphi(g)-
\varphi(g) \varphi(f)\bigr)\mid \Psi\bigr\rangle\,\,,
\label{AofAC}
\end{align}
is carried by the set $M^{\ell^\prime}=\bigl\{\bigr(z_1,z_2)
\in \oC^{8} \mid z_1-z_2 \in V^{\ell^\prime}\}$, for any vectors
$\Theta,\Psi\in D_0$, {\it i.e.}, if the functional ${\boldsymbol{\sf F}}$ can be
extended to a continuous linear functional on ${\mathfrak H}(M^{\ell^\prime})$.
Here, $V^{\ell}$ denotes the complex $\ell$-neighborhood of the light cone $V_+$
\[
V^{\ell}=\Bigl\{z \in \oC^4 \mid
\exists\,\,x \in V_+, |{\rm Re}\,z - x| +
|{\rm Im}\,z|_1 < \ell \Bigr\}\,\,.
\]
\end{enumerate} 

The remaining of this paper deals with the proof of some important theorems in 
a quantum field theory, namely the proofs of the CPT theorem and the theorem on
the Spin-Statistics connection in the setting of a quantum field theory with a fundamental
length. The proof of these results as given in the literature~\cite{SW}-\cite{Haag} usually
seem to rely on the local character of the distributions in an essential way. In the
approach which we follow the apparent source of difficulties in proving these results
is the fact that for functionals belonging to the space of tempered ultrahyperfunctions
the standard notion of the localization principle breaks down.

For simplicity we shall discuss the case of a scalar field. Let $\Phi$ be a Hermitian
scalar field. For this field, it is well-known that in terms of the Wightman functions,
a necessary and sufficient condition for the existence of CPT theorem is given by:
\begin{equation}
{\mathfrak W}_m(x_1,\ldots,x_m)={\mathfrak W}_m(-x_m,\ldots,-x_1)\,\,.
\label{cptt1}
\end{equation}
Under the usual temperedness assumption, the proof of the equality (\ref{cptt1}) as given
by Jost~\cite{Jost} starts of the weak local commutativity (WLC) condition, namely under
the condition that the vacuum expectation value of the commutator of $n$ scalar fields
vanishes outside the light cone, which in terms of Wightman functions takes the form
\begin{equation}
{\mathfrak W}_m(x_1,\ldots,x_m)-{\mathfrak W}_m(x_m,\ldots,x_1)=0\,\,,\quad
{\mbox{for}}\quad x_j-x_{j+1} \in {\mathscr J}_m\,\,.
\label{wlc}
\end{equation}
Jost's proof that the WLC condition (\ref{wlc}) is equivalent to the CPT symmetry
(\ref{cptt1}) one relies on the fact that the proper complex Lorentz group contains
the total spacetime inversion. Therefore, the equality (\ref{cptt1}) holds, taking in
account the symmetry property ${\mathscr J}_m=-{\mathscr J}_m$ in whole extended analyticity
domain, by the Bargman-Hall-Wightman (BHW) theorem. In particular, the BHW theorem has been
shown~\cite{BruNa1} to be applicable to domains of the form ${\mathscr T}_{m-1}=\oR^{4(m-1)}+
V_+(\ell^\prime_{1},\ldots,\ell^\prime_{{m-1}})$. Then, the Wightman
functions depending on the relative coordinates $W_m(\zeta_1,\ldots,\zeta_{m-1})$
can be extended to be a holomorphic function on the extended tube
\[
{\mathscr T}^{\rm ext.}_{m-1}=\Bigl\{(\Lambda\zeta_1,\ldots,\Lambda\zeta_{m-1}))
\mid (\zeta_1,\ldots,\zeta_{m-1}) \in {\mathscr T}_{m-1}, \Lambda \in
{\mathscr L}_+(\oC)\Bigr\}\,\,,
\]
which contains certain real points of type of the Jost points. 

In order to prove that CPT theorem holds in QFT with a fundamental length in terms
of tempered ultrahyperfunctions, an analogous of the WLC condition is now formulated:

\begin{definition}
The quantum field $\Phi$ defined on the test function space
${\mathfrak H}(T(\oR^4))$ is said to satisfy the weak extended local commutativity (WELC)
condition if the functional
\[
{\boldsymbol{\sf F}}={\mathfrak W}_m(z_1,\ldots,z_m)-
{\mathfrak W}_m(z_n,\ldots,z_1)\,\,,
\]
is carried by set $M_j^{\ell^\prime}=\Bigl\{\bigr(z_1,\ldots,z_m)
\in \oC^{4m} \mid z_j-z_{j+1} \in V^{\ell^\prime} \Bigr\}$.
\label{WELC}
\end{definition}

The WELC condition takes the form $W_m(\zeta_1,\ldots,\zeta_{m-1})-
W_{m}(-\zeta_{m-1},\ldots,-\zeta_1)$ in terms of the Wightman functions depending
on the relative coordinates $\zeta_j=z_j-z_{j+1} \in V^{\ell^\prime}$.

\begin{proposition}[\cite{DZH}]
In a quantum field theory defined on the test function space
${\mathfrak H}(T(\oR^4))$, the Wightman functions $W_m(\zeta_1,\ldots,\zeta_{m-1})$
and $W_{m}(-\zeta_{m-1},\ldots,-\zeta_1)$ satisfy the following equality
$W_m(\zeta_1,\ldots,\zeta_{m-1})=W_{m}(-\zeta_{m-1},\ldots,-\zeta_1)$
on their res\-pective domains of holomorphy.
\label{sym}
\end{proposition}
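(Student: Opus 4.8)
The plan is to reproduce, in the tempered ultrahyperfunction framework, the classical Jost argument by which weak local commutativity propagates from the ``Jost points'' to the entire domain of holomorphy, with the Uniqueness Theorem \ref{UniqueTheo} playing the role that the edge-of-the-wedge theorem and pointwise vanishing play in the Schwartz-distribution case. First I would record that, by the spectral condition $\boldsymbol{\sf Ax.^\prime 3}$ together with Proposition \ref{Propo1} and Theorem \ref{theorem1}(\textit{iii}), the Wightman functional $W_m(\zeta_1,\ldots,\zeta_{m-1})$ in relative coordinates is the boundary value of a function holomorphic on the forward tube $\mathscr{T}_{m-1}=\oR^{4(m-1)}+V_+(\ell^\prime_1,\ldots,\ell^\prime_{m-1})$, and that the permuted functional $W_m(-\zeta_{m-1},\ldots,-\zeta_1)$ is likewise the boundary value of a function holomorphic on the reflected tube $-\mathscr{T}_{m-1}$.

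Next I would invoke the Bargmann--Hall--Wightman theorem, which, as noted in the discussion preceding the statement, is applicable to tubes of the form $\mathscr{T}_{m-1}$ in the Br\"uning--Nagamachi setting~\cite{BruNa1}. This extends both holomorphic functions to the extended tube $\mathscr{T}^{\rm ext.}_{m-1}$, on which they are invariant under the proper complex Lorentz group $\mathscr{L}_+(\oC)$. The decisive algebraic fact is that the total spacetime inversion $-\mathbf{1}$ lies in $\mathscr{L}_+(\oC)$; hence the reflection $\zeta\mapsto-\zeta$ carries $\mathscr{T}^{\rm ext.}_{m-1}$ into itself, and both $W_m(\zeta_1,\ldots,\zeta_{m-1})$ and $W_m(-\zeta_{m-1},\ldots,-\zeta_1)$ are holomorphic on the \emph{single} connected domain $\mathscr{T}^{\rm ext.}_{m-1}$, which contains a real neighborhood of Jost-type points together with its reflection.

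With both sides defined and holomorphic on a common connected domain, the remaining task is to show they coincide. Here I would feed in the WELC condition of Definition \ref{WELC}: the difference functional $\boldsymbol{\sf F}=W_m(\zeta_1,\ldots,\zeta_{m-1})-W_m(-\zeta_{m-1},\ldots,-\zeta_1)$ is carried by the set $M_j^{\ell^\prime}$, a complex $\ell^\prime$-neighborhood of the light cone, which is strictly smaller than all of $\oC^{4m}$. Since the Fourier transform of $\boldsymbol{\sf F}$ inherits, from $\boldsymbol{\sf Ax.^\prime 3}$, support in the proper convex cone $V^*$, the Uniqueness Theorem \ref{UniqueTheo} applies and forces $\boldsymbol{\sf F}\equiv 0$ as an element of ${\mathscr U}_c$. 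Transporting this identity through the boundary-value correspondence of Theorem \ref{theorem1}(\textit{iii}) then yields the asserted equality of holomorphic functions on their respective domains of holomorphy.

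The hard part will be the passage in the last step from ``$\boldsymbol{\sf F}$ is carried by $M_j^{\ell^\prime}$'' to ``$\boldsymbol{\sf F}\equiv 0$'', because the absence of any localization principle means the vanishing of $\boldsymbol{\sf F}$ cannot be read off at real points as in Jost's original proof; the global conclusion must instead be manufactured entirely by the Uniqueness Theorem. Making this rigorous requires two technical verifications: that the carrier cone associated with $M_j^{\ell^\prime}$ is genuinely proper and distinct from the whole space, so that Theorem \ref{UniqueTheo} is applicable, and that the complex-Lorentz covariance of $\mathscr{T}^{\rm ext.}_{m-1}$ is compatible with the $\ell^\prime$-neighborhoods $V^{\ell^\prime}$ entering the WELC carrier set, so that the extended domain and the carrier condition refer to the same geometric data.
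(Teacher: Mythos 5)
The paper does not actually prove Proposition \ref{sym}: it imports it from \cite{DZH}, and the paragraph preceding Definition \ref{WELC} only sketches the intended mechanism --- the Bargmann--Hall--Wightman theorem applied to the tubes ${\mathscr T}_{m-1}$, the fact that the total inversion lies in ${\mathscr L}_+(\oC)$, and the presence of real Jost-type points in ${\mathscr T}^{\rm ext.}_{m-1}$. Your first two paragraphs track that mechanism faithfully (modulo the fact that the proposition tacitly presupposes the commutativity axiom $\boldsymbol{\sf Ax.^\prime 4}$, which you correctly make explicit by invoking WELC).

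Your third paragraph, however, contains a genuine error, and you have correctly sensed where it is but misdiagnosed it as a technicality. You assert that the Fourier transform of ${\boldsymbol{\sf F}}=W_m(\zeta_1,\ldots,\zeta_{m-1})-W_m(-\zeta_{m-1},\ldots,-\zeta_1)$ has support in the proper convex cone $V^*$ and then apply Theorem \ref{UniqueTheo} to conclude ${\boldsymbol{\sf F}}\equiv 0$. Only the first term has spectral support in $V^*$; the reversed term is the expectation value of the product in the opposite order, so its Fourier transform is supported in the reflected cone $-V^*$, and the difference has spectral support in $V^*\cup(-V^*)$, which is not proper. Theorem \ref{UniqueTheo} therefore does not apply, and this cannot be repaired: the conclusion ${\boldsymbol{\sf F}}\equiv 0$ as a functional is already false for a free field at $m=2$, where ${\boldsymbol{\sf F}}$ is the Pauli--Jordan commutator function, nonvanishing inside the light cone. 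What the carrier hypothesis actually delivers is weaker and of edge-of-the-wedge type: ${\boldsymbol{\sf F}}$ is a difference of boundary values taken from \emph{opposite} tubes, and its being carried by $M_j^{\ell^\prime}$ implies, via the generalized edge-of-the-wedge theorem for tempered ultrahyperfunctions (Refs.~\cite{Daniel1,Daniel2}), that the two holomorphic functions admit a common analytic continuation through the real Jost-type points outside the carrier, hence coincide on the connected common domain ${\mathscr T}^{\rm ext.}_{m-1}$ --- while their boundary values at non-Jost real points remain distinct. The Uniqueness Theorem is the wrong key lemma here; in the CPT and spin-statistics arguments it is reserved for functionals whose spectral support is genuinely one-sided.
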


We now are in a position to state the main results of this section.
Combining the spectral condition (\ref{EQ31'}) with the Lemma \ref{lemmaMari} and
the theorems established in Sections \ref{Sec4} and \ref{Sec5} of the present paper,
we can proceed as in Soloviev~\cite[CPT Theorem]{Solo1} and \cite[Spin-Statistics Theorem]{Solo2}
--- by replacing the reference to the spaces $S^0$ (or $Z$) and $S^{\prime 0}$ (or $Z^\prime$)
by a reference to the spaces ${\mathfrak H}$ and ${\mathfrak H}^\prime$, and considering
that ${\mathscr S}^\prime \subset H^\prime \subset {\mathscr D}^\prime$ and
${\mathscr S}^\prime \subset {\mathfrak H}^\prime \subset Z^\prime$, with the
injections being continuous --- in order to show that the following theorems are true
to a quantum field theory defined on the test function space ${\mathfrak H}(T(\oR^4))$.

\begin{theorem}[CPT Theorem]
In order to a quantum field theory defined on the test function space
${\mathfrak H}(T(\oR^4))$ to be invariant under the CPT-operation is
necessary and sufficient that the WELC condition is fulfilled.
\label{cpttheo}
\end{theorem}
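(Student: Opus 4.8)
The plan is to transfer Jost's derivation, in the form Soloviev gave it in~\cite{Solo1,Solo2}, to the tempered ultrahyperfunction framework, using the uniqueness theorem of Section~\ref{Sec5} in place of the classical ``vanishing on a real open set'' step. The first move is to reduce both the CPT relation~(\ref{cptt1}) and the WELC condition of Definition~\ref{WELC} to a single object, the difference functional ${\boldsymbol{\sf F}}={\mathfrak W}_m(z_1,\ldots,z_m)-{\mathfrak W}_m(z_m,\ldots,z_1)$, which in relative coordinates reads $W_m(\zeta_1,\ldots,\zeta_{m-1})-W_m(-\zeta_{m-1},\ldots,-\zeta_1)$. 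Necessity is the easy half: if the theory is CPT invariant then, combining~(\ref{cptt1}) with the total spacetime inversion contained in ${\mathscr L}_+(\oC)$ furnished by the Bargmann--Hall--Wightman (BHW) theorem, the two terms of ${\boldsymbol{\sf F}}$ coincide, so ${\boldsymbol{\sf F}}=0$ and is \emph{a fortiori} carried by $M_j^{\ell^\prime}$. The substance of the proof is therefore the converse implication, WELC $\Rightarrow$ CPT.

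For sufficiency I would first exploit the spectral condition $\boldsymbol{\sf Ax.^\prime3}$: by~(\ref{EQ31'}) the Fourier transform $\widehat{\mathfrak W}_m$ lies in $H^\prime_{V^*}(\oR^{4m},\oR^{4m})$, i.e. it is a distribution of exponential growth with support in the properly convex cone $V^*$, so by Lemma~\ref{lemmaMari} and Theorem~\ref{theorem1} each ${\mathfrak W}_m$ is the distributional boundary value of a function holomorphic in the forward tube over $V^*$. Next I would bring in Poincar\'e invariance $\boldsymbol{\sf Ax.^\prime2}$ together with the fact, established by Br\"uning--Nagamachi~\cite{BruNa1}, that the BHW theorem is available on the domains ${\mathscr T}_{m-1}=\oR^{4(m-1)}+V_+(\ell^\prime_1,\ldots,\ell^\prime_{m-1})$; this continues $W_m$ holomorphically to the extended tube ${\mathscr T}^{\rm ext.}_{m-1}$, which contains Jost points, and supplies the inversion symmetry that lets me replace the order-reversed term ${\mathfrak W}_m(z_m,\ldots,z_1)$ by ${\mathfrak W}_m(-z_m,\ldots,-z_1)$. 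After this replacement the Fourier transform of ${\boldsymbol{\sf F}}$ is confined to the \emph{single} proper cone $V^*$, while Proposition~\ref{sym} identifies the analytic continuations of its two terms, exhibiting ${\boldsymbol{\sf F}}$ as the boundary ``jump'' of one holomorphic function across the Jost region.

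The decisive step is the uniqueness argument. By hypothesis the WELC condition asserts that ${\boldsymbol{\sf F}}$ extends to a continuous linear functional on ${\mathfrak H}(M_j^{\ell^\prime})$, that is, ${\boldsymbol{\sf F}}$ is \emph{carried} by the complex light-cone neighborhood $M_j^{\ell^\prime}$, which is a proper subset of $\oC^{4m}$ and not the whole space; by Lemma~\ref{lemmaWF} this forces $WF_A({\boldsymbol{\sf F}})\subset\oR^{4m}\times C$ for a cone $C\neq\oR^{4m}$. Since the Fourier transform of ${\boldsymbol{\sf F}}$ already has support in the proper convex cone $V^*$, Theorem~\ref{UniqueTheo} applies and yields ${\boldsymbol{\sf F}}=0$: a nonzero distribution of exponential growth supported in a proper convex cone can have only the whole space as a carrier of its inverse Fourier transform. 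Finally, ${\boldsymbol{\sf F}}=0$ says ${\mathfrak W}_m(z_1,\ldots,z_m)={\mathfrak W}_m(z_m,\ldots,z_1)$, and composing once more with the inversion symmetry from ${\mathscr L}_+(\oC)$ turns this into the CPT relation~(\ref{cptt1}), which completes the argument.

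I expect the main obstacle to sit exactly at the junction between the \emph{support} of $\widehat{\boldsymbol{\sf F}}$ (controlled by the spectral condition) and the \emph{carrier} of ${\boldsymbol{\sf F}}$ (controlled by WELC), since these are the two hypotheses that Theorem~\ref{UniqueTheo} must be fed. Taken at face value the order-reversed difference has Fourier support in $V^*\cup(-V^*)$, which is not proper, and only the BHW inversion applied to the reversed term collapses this to the single cone $V^*$; making that functional identity rigorous (rather than a merely analytic one, as in Proposition~\ref{sym}) in the \emph{non-localizable} space ${\mathfrak H}^\prime$ is delicate, because here ``${\boldsymbol{\sf F}}$ vanishes on an open set'' is not available and must be replaced throughout by ``${\boldsymbol{\sf F}}$ has a carrier smaller than $\oR^{4m}$.'' The remaining care is bookkeeping: one must verify that Soloviev's reduction to the case $0\in\supp V$ --- the contraction series $\sum_{j} a_j V_j$ with $V_j(\xi)=V(j\xi)$ used inside the proof of Theorem~\ref{UniqueTheo} --- survives verbatim under the continuous injections ${\mathscr S}^\prime\subset H^\prime\subset{\mathscr D}^\prime$ and ${\mathscr S}^\prime\subset{\mathfrak H}^\prime\subset Z^\prime$, so that the passage from $S^{\prime 0}$ and $Z^\prime$ to ${\mathfrak H}$ and ${\mathfrak H}^\prime$ really is as mechanical as announced.
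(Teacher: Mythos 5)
Your proposal follows essentially the same route as the paper, which itself only sketches the argument by deferring to Soloviev's proof: combine the spectral condition~(\ref{EQ31'}) and Lemma~\ref{lemmaMari} with the carrier/wavefront machinery of Sections~\ref{Sec4} and~\ref{Sec5}, use the BHW inversion and Proposition~\ref{sym} to reduce CPT and WELC to the vanishing of the single difference functional, and invoke Theorem~\ref{UniqueTheo} in place of the classical ``vanishing on a real open set'' step. Your write-up is in fact more explicit than the paper's, and correctly flags the one genuinely delicate point (collapsing the Fourier support from $V^*\cup(-V^*)$ to a single proper cone before the uniqueness theorem can be applied), which the paper leaves entirely to the cited reference.
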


\begin{theorem}[Spin-Statistics Theorem]
Suppose that $\Phi$ and its Hermitian conjugate $\Phi^*$ satisfy the WELC condition
with the ``wrong'' connection of spin and statistics. Then $\Phi(x)\Omega_o=
\Phi^*(x)\Omega_o=0$.
\label{sstheo}
\end{theorem}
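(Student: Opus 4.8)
The plan is to transcribe Soloviev's proof of the spin--statistics connection into the tempered ultrahyperfunction framework, with Theorem~\ref{UniqueTheo} taking over the role played there by the uniqueness theorem; the continuous inclusions $\mathscr{S}^\prime \subset H^\prime \subset \mathscr{D}^\prime$ and $\mathscr{S}^\prime \subset {\mathfrak H}^\prime \subset Z^\prime$ guarantee that both the analytic wavefront-set calculus of Lemma~\ref{lemmaWF} and the classical Jost-point analyticity arguments remain at our disposal. First I would reduce the assertion to a statement about the two-point functional. By the positivity of the Wightman functional together with the hermiticity in $\boldsymbol{\sf Ax.^\prime 1}$ one has $\lVert \Phi(f)\Omega_o\rVert^2={\mathfrak W}_2(f^*\otimes f)$, so it suffices to show that the two-point function ${\mathfrak W}_2$ of $\Phi$ (and, symmetrically, that of $\Phi^*$) vanishes identically; the conclusion $\Phi(x)\Omega_o=\Phi^*(x)\Omega_o=0$ then follows immediately.

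Next I would exploit the analytic structure. By the spectral condition (\ref{EQ31'}) the Fourier transform of ${\mathfrak W}_2$, regarded as a functional of the relative coordinate $\zeta=z_1-z_2$, has support in the proper convex cone $V^*=\overline{V}_+$; hence by Lemma~\ref{lemmaMari} and Theorem~\ref{PWSTheo} the function ${\mathfrak W}_2$ is a boundary value of a function holomorphic on the forward tube, and likewise the order-reversed function is a boundary value from the backward tube. Proposition~\ref{sym}, which encodes the symmetry obtained from the total spacetime inversion in ${\mathscr L}_+(\oC)$ via the Bargmann--Hall--Wightman theorem, identifies these two holomorphic functions across the Jost (real spacelike) points. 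The ``wrong'' connection of spin and statistics is precisely the hypothesis that, for this field, the WELC functional of Definition~\ref{WELC} carries a relative minus sign on the carrier set $M^{\ell^\prime}$, so that the Jost-point identification becomes an odd, rather than even, matching condition.

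With these ingredients in place the decisive step is the application of Theorem~\ref{UniqueTheo}. The relevant functional is, by the spectral condition, the inverse Fourier transform of a distribution $V\in H^\prime(\oR^{4m};\oR^{4m})$ whose support lies in the proper convex cone $V^*$; on the other hand the WELC hypothesis of Definition~\ref{WELC} asserts that this same functional is carried by the set $M^{\ell^\prime}$, which is a closed cone different from the whole of $\oR^{4m}$. Theorem~\ref{UniqueTheo} then forces $V\equiv0$, so that the sign-flipped matching of the previous paragraph can be consistent only if ${\mathfrak W}_2$ vanishes identically. Feeding this back into $\lVert\Phi(f)\Omega_o\rVert^2={\mathfrak W}_2(f^*\otimes f)=0$ yields $\Phi(x)\Omega_o=0$, and the identical argument applied to $\Phi^*$ completes the proof.

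The hard part will be the reconciliation carried out in this decisive step: one must verify that the carrier prescribed by the WELC condition---an $\ell^\prime$-neighborhood $M^{\ell^\prime}$ of the light cone dictated by the fundamental length---is genuinely contained in a closed cone strictly smaller than $\oR^{4m}$, while the Fourier-transformed functional simultaneously has support in the proper convex cone $V^*$ demanded by the spectral condition, so that both hypotheses of Theorem~\ref{UniqueTheo} hold at once. Ensuring that the analytic continuation across the Jost points survives the loss of ordinary localization---exactly the point at which the wavefront-set estimate of Lemma~\ref{lemmaWF} and the strong-boundedness property of Theorem~\ref{theorem1} must be invoked---is where the real work lies; by contrast, the reduction to the two-point function and the final positivity argument are routine.
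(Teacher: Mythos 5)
Your overall strategy coincides with the paper's: the text offers no argument for Theorem~\ref{sstheo} beyond the instruction to repeat Soloviev's proof with $S^{\prime 0}$ and $Z^\prime$ replaced by $H^\prime$ and ${\mathfrak H}^\prime$, and your sketch is a transcription of exactly that proof, with Theorem~\ref{UniqueTheo} in the role of the uniqueness theorem, Proposition~\ref{sym} supplying the Bargmann--Hall--Wightman symmetry, and positivity closing the argument. So the route is the intended one and is in fact more explicit than the paper itself.

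There is, however, a genuine defect in your ``decisive step''. You apply Theorem~\ref{UniqueTheo} to the WELC functional itself, asserting that the spectral condition puts the support of its Fourier transform inside the proper convex cone $V^*$ while the WELC hypothesis makes $M^{\ell^\prime}$ a carrier. But $\boldsymbol{\sf Ax.^\prime 3}$ constrains only the correctly ordered Wightman function; the order-reversed term ${\mathfrak W}_m(z_m,\ldots,z_1)$ occurring in the WELC functional has its Fourier transform supported in the \emph{reflected} cone, so the combination has Fourier transform supported in a union of two opposite cones, which is not proper convex, and Theorem~\ref{UniqueTheo} is not applicable to it as it stands. The repair --- and the reason Proposition~\ref{sym} appears in the paper at all --- is to use the identity $W_m(\zeta_1,\ldots,\zeta_{m-1})=W_m(-\zeta_{m-1},\ldots,-\zeta_1)$ \emph{first}, trading the reversed-order term for the same-order term at reflected arguments; in the two-point case the resulting combination $\langle\Omega_o\mid\Phi(x)\Phi^*(y)\Omega_o\rangle+\langle\Omega_o\mid\Phi^*(x)\Phi(y)\Omega_o\rangle$, viewed in the relative variable, has Fourier transform supported in the single proper cone ${\overline V}_+$ and, by the wrong-statistics hypothesis together with the central symmetry of the carrier, is carried by a closed cone different from the whole space. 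Only then does Theorem~\ref{UniqueTheo} give its identical vanishing, after which smearing with $\overline{f}\otimes f$ yields $\|\Phi(f)\Omega_o\|^2+\|\Phi^*(\overline{f})\Omega_o\|^2=0$ and hence the conclusion. You do correctly flag the residual technical point that a functional carried by the $\ell^\prime$-neighborhood $M^{\ell^\prime}$ must still be shown to be carried by an honest closed cone distinct from $\oR^{4m}$ before the uniqueness theorem can be quoted; that is where the carrier analysis of Section~\ref{Sec4} has to be invoked.
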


\section*{Acknowledgments}
The author would like to express his gratitude to Afr\^anio R. Pereira,
Winder A.M. Melo and to the Departament of Physics of the Universidade Federal
de Vi\c cosa (UFV) for the opportunity of serving as Visiting Researcher.


\end{document}